\documentclass[12pt]{amsart}
\usepackage{amssymb,amsmath,epsf}
\usepackage{graphicx}
\usepackage{t1enc}
\usepackage[latin1]{inputenc}
\usepackage[german,english]{babel}
\usepackage{amsfonts}
\usepackage[all]{xy}
\usepackage{color}

\usepackage{geometry}
\geometry{textwidth=14.5cm}

\newtheorem{defi}{Definition}[section]
\newtheorem{prop}[defi]{Proposition}
\newtheorem{theo}[defi]{Theorem}

\newtheorem{lem}[defi]{Lemma}

\newtheorem{coro}[defi]{Corollary}

\theoremstyle{remark}
\newtheorem{rem}[defi]{Remark}

\begin{document}
\title[Double Centralizers of Parabol. Subgroups of Braid Groups]{Double Centralizers of Parabolic Subgroups of Braid Groups}

\author[Garber, Kalka, Liberman, Teicher]{David Garber, Arkadius Kalka, Eran Liberman and Mina Teicher}

\address{David Garber, Department of Applied Mathematics, Faculty of
  Sciences, Holon Institute of Technology, 52 Golomb st., PO
  Box 305, 58102 Holon, Israel}
\email{garber@hit.ac.il}

\address{Arkadius Kalka and Eran Liberman, Department of Mathematics, Bar-Ilan University, 52900 Ramat-Gan, Israel}
\email{arkadius.kalka@rub.de, eranyliberman@gmail.com}

\address{Mina Teicher, Department of Mathematics, Bar-Ilan University, 52900 Ramat-Gan, Israel \\
and NYU School of Engineering}
\email{teicher@macs.biu.ac.il}

\begin{abstract}
We characterize the double centralizer of all parabolic subgroups of the braid groups.
We apply this result to provide a new and potentially more efficient solution to the subgroup conjugacy problem for parabolic
subgroups.
In the course of the proof we also characterize the centralizer for all parabolic subgroups.
\end{abstract}

\subjclass[2010]{20F36}

\keywords{Braid group, centralizer, parabolic subgroup, double centralizer, subgroup conjugacy problem}

\maketitle

\section{Introduction}
Fenn, Rolfsen and Zhu \cite{FRZ96} determined the centralizer of the standard parabolic subgroup $B_m$ in the braid group $B_n$ ($m<n$) \cite{Ar47}. This result was generalized by Paris \cite{Pa97} who computed generating sets for the centralizer of parabolic subgroups having connected associated Coxeter graphs for Artin groups of type $A$, $B$ and $D$. 

In this paper, we characterize the \emph{double centralizer} of any parabolic subgroup $H$ of $B_n$, namely we show that $C_{B_n}(C_{B_n}(H)) =Z(B_n) \cdot H$, where $Z(B_n)$ denotes the center of the braid group $B_n$.

Furthermore, we apply this result to the subgroup conjugacy problem for parabolic subgroups of $B_n$.
The conjugacy problem in the braid group $B_n$ was solved in the seminal paper of Garside \cite{Ga69}.
A more general problem is the {\it subgroup conjugacy problem for $H\le B_n$}: given two elements $x, y\in B_n$, and a subgroup $H\le B_n$, decide
whether $x$ and $y$ are conjugated by an element in $H$. In general, this problem is presumably undecidable, because $F_2\times F_2$ can be embedded in $B_n$ for $n\ge 5$ (where $F_2$ is the free group on two generators), and for $F_2\times F_2$, according to a result of Mihailova \cite{Mi58}, even the subgroup membership (or generalized word) problem is unsolvable.
Nevertheless, it is interesting to consider the subgroup conjugacy problem for particular natural subgroups of $B_n$. Indeed, even the
subgroup conjugacy problem for the natural embedded subgroups $B_m\le B_n$, for $m\le n$, has been open for $m\le n-2$ until \cite{KLT10}. The case $m=n-1$ was resolved in \cite{KLT09}, which was of particular interest, since the so-called {\it shifted conjugacy problem} \cite{De06}, which was also unknown to be solvable \cite{De06, LU08, LU09}, is equivalent to some subgroup conjugacy problem for $B_{n-1}$ in $B_n$ \cite{LU08,KLT09}.
In \cite{KLT09},  the subgroup conjugacy problem for $B_{n-1}\le B_n$ was transformed to an equivalent bi-simultaneous conjugacy problem.
Then, in \cite{KLT10}, the subgroup conjugacy problem for all parabolic subgroups of braid groups, even for all so-called Garside subgroups \cite{Go07} of Garside groups, was solved completely, and deterministic algorithms were provided. The solution in \cite{KLT10} does not resort to a detour via a simultaneous
conjugacy problem,
but there we cannot apply any cycling and decycling operations (see \cite{EM94}). Therefore, the
invariant subsets of the conjugacy class are quite large. \par

In this paper, we provide a second solution of the subgroup conjugacy problem for all parabolic subgroups of $B_n$. 
This solution is a generalization of the approach developed in \cite{KLT09}, namely we reduce the problem to an instance of a simultaneous conjugacy
problem.
Also the invariant subsets of the simultaneous conjugacy class involved in
Lee and Lee's solution \cite{LL02} are relatively big.  
In \cite{KTV14}, we introduce new much smaller invariant subsets
of the simultaneous conjugacy class, namely the so-called {\em Lexicographic Super Summit Sets}.
Using these new improved invariant subsets, our new approach to the
subgroup conjugacy problem for parabolic subgroups of $B_n$, given in Corollary \ref{subCPHRed},
is expected to be more efficient than the direct solution (using fractional normal forms) from \cite{KLT10}.

\medskip 

Though the subgroup conjugacy problem for standard parabolic subgroups of the braid groups deserves interest on its own, a particular motivation comes from applications in cryptography. Indeed, Dehornoy \cite{De06} proposed an authentication scheme based on the shifted conjugacy problem. We remark that, by using generalized shifted conjugacy operations, it is straightforward to construct shifted conjugacy problems which can be reduced to some subgroup conjugacy problem for $B_m\le B_n$.
Furthermore, the Diffie-Hellman public key exchange based on the braid group, introduced by Ko et al. \cite{KL+00}, relies on the subgroup conjugacy problem for $B_m\le B_n$. Though Gebhardt broke this cryptosystem in \cite{Ge06} with $100 \%$ success rate, using his ultra summit sets, introduced in \cite{Ge05}, we
have to point out that he did not provide a general solution to the subgroup conjugacy problem for $B_m\le B_n$.

\begin{rem}
As a further application of our main theorem on double centralizers, we mention the first deterministic solution to the {\em double coset problem}
for parabolic subgroups of braid groups (see also \cite{KTT14}).
\end{rem}

\medskip

{\sc Outline.} 
We provide two proofs for our main result on double centralizers.
The first proof, given in Section \ref{Elementary approach}, is elementary and the result is shown only for parabolic
subgroups with connected associated Coxeter graph (as in \cite{Pa97}).
Despite its limitation we expose this elementary proof here in detail, because one may succeed in future to "algebraize" its techniques completely. Then it may also apply to other Artin groups, e.g. of type B or D. \\
More precisely, Section \ref{Elementary approach} contains the following subsections.
Section \ref{prem} deals with generating sets of the centralizer of $\Delta _r^2$ in $B_n$. We simplify Gurzo's generating sets in order to determine the algebraic structure of these centralizers.
For the convenience of the reader, we include in Section \ref{nMin2} a solution to the subgroup conjugacy problem for $B_{n-2}$ in $B_n$, where we introduce the basic ideas which will be used in greater generality in the
subsequent two subsections. Then, Section \ref{CCparaH} describes the main result concerning double centralizers for parabolic subgroups
(with connected associated Coxeter graph).
Finally, Section \ref{subCPpara} contains the application to the subgroup conjugacy problem for these subgroups. \\
Section \ref{Full proof} contains the full proof. 
Here we view the braid group as the mapping class group of the $n$-punctured disc, and we use the non-trivial fact that,
for any braid $\beta $, every $\gamma \in C_{B_n}(\beta )$ preserves the canonical reduction system of $\beta $.
For the convenience of the reader, we consider in section \ref{Short proof} first the case $H=B_r$ ($r<n$). Then in section \ref{General proof}
we prove the main result for any parabolic subgroup $H$ of $B_n$.
There, in the course of the proof we also characterize the centralizer for all parabolic subgroups which seems to have been done so far
only for parabolic subgroups with connected associated Coxeter graph (see \cite{Pa97}).

\section{Elementary approach} \label{Elementary approach}
\subsection{Gurzo's presentation} \label{prem}

We use the following definitions. Let $\partial : B_{\infty } \longrightarrow B_{\infty }$ be the injective shift homomorphism, defined by
$\sigma _i \mapsto \sigma _{i+1}$.
\begin{defi} (\cite[Definition I.4.6.]{De00}) 
For $n\ge 2$, define $\delta _n=\sigma _{n-1}\cdots \sigma _2\sigma _1$. For $p, q \ge 1$, we set:
\[ \tau _{p,q}=\delta _{p+1}\partial (\delta _{p+1})\cdots \partial ^{q-1}(\delta _{p+1}), \]
i.e. the strands $p+1, \ldots , p+q$ cross over the strands $1, \ldots , p$ (see Figure \ref{taupq}).
\end{defi}

\begin{figure}[!ht]
\epsfysize 2cm
\epsfbox{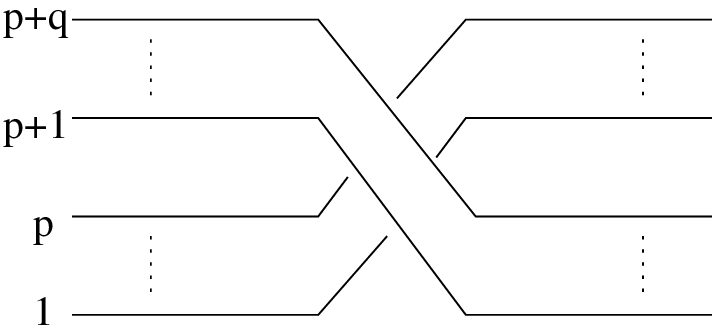}
\caption{$\tau _{p,q}$} \label{taupq}
\end{figure}

% Note: pdflatex doesn't work. Use latex-> ps and ps2pdf 

%\begin{figure}[!ht]
%\includegraphics[scale=.7]{taupq.eps}
%\caption{$\tau _{p,q}$} \label{taupq}
%\end{figure}

In particular, for $k\ge 0$ and $l \ge 1$ we denote (see Figures \ref{bk+1k+l1} and \ref{bk+11}):
\[ {\bar b}_{[k+1,k+l],1}=\tau _{k,l}\tau _{l,k} \quad  {\rm and} \quad  {\bar b}_{k+1,1}={\bar b}_{[k+1,k+1],1}=\tau _{k,1} \tau _{1,k}. \]

\begin{figure}[!ht]
\begin{minipage}{.5\textwidth}
  \centering
  \epsfysize 3cm
  \epsfbox{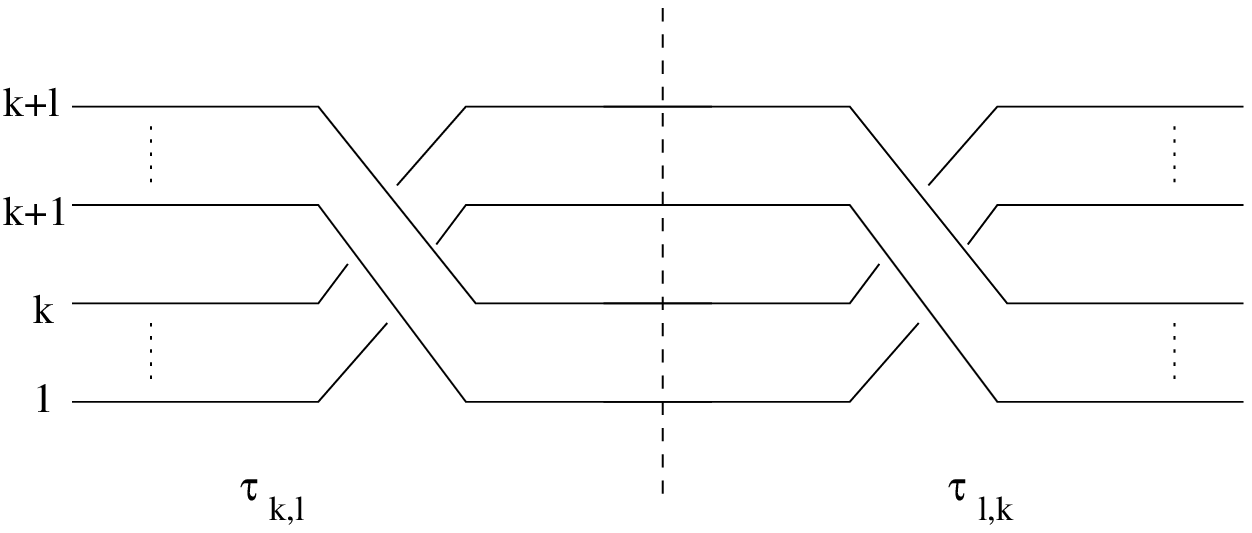}
  \caption{${\bar b}_{[k+1,k+l],1}$} \label{bk+1k+l1}
\end{minipage}%
\begin{minipage}{.5\textwidth}
  \centering
  \epsfysize 3cm
  \epsfbox{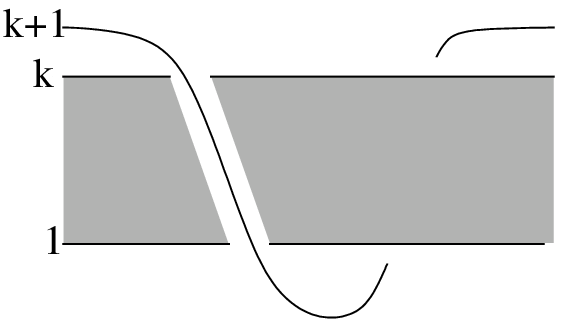}
  \caption{${\bar b}_{k+1,1}$} \label{bk+11}  
\end{minipage}
\end{figure}

According to Gurzo \cite{Gurzo85}, for $1\le r \le n-1$, the centralizer or $\Delta _r^2$ is given by:
\begin{prop}
$$C_{B_n}(\Delta_{r}^2) = B_r \cdot \langle \sigma_{r+1},\dots,\sigma_{n-1}, \bar{b}_{r+1,1},\bar{b}_{[r+1,r+2],1},\dots,\bar{b}_{[r+1,n],1}\rangle.$$
\end{prop}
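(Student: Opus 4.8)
The plan is to establish the two inclusions separately; the substantive range is $2\le r\le n-1$. The inclusion $\supseteq$ is a direct (if slightly tedious) braid computation, while $\subseteq$ is Gurzo's theorem \cite{Gurzo85}, after which the remaining task is to rewrite his generating set into the compact form displayed above --- this is the ``simplification'' step.

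\emph{The inclusion $\supseteq$.} First, $B_r=\langle\sigma_1,\dots,\sigma_{r-1}\rangle$ centralizes $\Delta_r^2$ because $\Delta_r^2$ generates its center, and each $\sigma_{r+j}$ with $j\ge 1$ involves only strands disjoint from the support $\{1,\dots,r\}$ of $\Delta_r^2$, hence commutes with it. For the band element $\bar b_{[r+1,r+l],1}=\tau_{r,l}\tau_{l,r}$ --- which is the pure braid in which the block of strands $r+1,\dots,r+l$ winds once around the block $1,\dots,r$ --- I would use the ``block-shift'' relations: $\tau_{r,l}$ conjugates the subgroup $\langle\sigma_1,\dots,\sigma_{r-1}\rangle$ onto $\langle\sigma_{l+1},\dots,\sigma_{l+r-1}\rangle$ by the shift $\partial^l$, so $\tau_{r,l}\,\Delta_r^2\,\tau_{r,l}^{-1}=\partial^l(\Delta_r^2)$, and $\tau_{l,r}$ then carries $\partial^l(\Delta_r^2)$ back to $\Delta_r^2$; combining these gives $\bar b_{[r+1,r+l],1}\,\Delta_r^2=\Delta_r^2\,\bar b_{[r+1,r+l],1}$ after tracking the block through the two drags (splitting into the cases $r\le l$ and $r>l$). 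The conceptual content is that $\Delta_r^2$ is the Dehn twist about the curve $C$ enclosing the first $r$ punctures, and winding a disjoint block once around this block preserves the isotopy class of $C$; but in the elementary section one verifies this by the braid manipulation or from a picture.

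\emph{The inclusion $\subseteq$.} By Gurzo \cite{Gurzo85}, every braid commuting with $\Delta_r^2$ lies in the subgroup generated by his generating set; it then suffices to express each of Gurzo's generators in terms of $\sigma_1,\dots,\sigma_{r-1}$, $\sigma_{r+1},\dots,\sigma_{n-1}$ and the nested bands $\bar b_{r+1,1},\bar b_{[r+1,r+2],1},\dots,\bar b_{[r+1,n],1}$, using the braid relations together with the identities relating $\bar b_{[r+1,r+l],1}$ to $\bar b_{[r+1,r+l-1],1}$ and $\sigma_{r+l-1}$. The point of the simplification is precisely that one needs only this nested family of bands rather than a band for every pair of blocks.

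The step I expect to be the main obstacle is the inclusion $\subseteq$. Gurzo's theorem is itself the heavy input, and a self-contained elementary argument would require showing, without mapping-class-group machinery, that any braid commuting with the full twist on the first $r$ strands necessarily ``respects the block'' $\{1,\dots,r\}$ --- equivalently, fixes the isotopy class of $C$ --- after which its decomposition as an inside braid times a word in the $\sigma_{r+j}$'s and the $\bar b$'s is comparatively routine. Granting Gurzo's result, only the bookkeeping of the simplification remains, which should pose no conceptual difficulty.
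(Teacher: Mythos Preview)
The paper does not prove this proposition at all: it is stated as a direct citation of Gurzo's result \cite{Gurzo85}, with no argument given. Your plan---verify $\supseteq$ by a routine braid computation and invoke Gurzo for $\subseteq$---is correct and is in fact more than the paper offers here.

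One point of confusion in your proposal: you speak of a ``simplification step'' needed to rewrite Gurzo's generators into the displayed form, but in the paper's presentation this \emph{is} already Gurzo's generating set; no rewriting is performed at this stage. The Nielsen-transformation simplification you have in mind---eliminating the $\bar b_{[r+1,r+l],1}$ for $l\ge 2$ in favour of $\bar b_{r+1,1}$ and the $\sigma_{r+j}$'s---is the content of the \emph{next} proposition, where the generating set is reduced to $B_r\cdot\langle \bar b_{r+1,1},\sigma_{r+1},\dots,\sigma_{n-1}\rangle$ and the algebraic structure $\mathcal{A}(A_{r-1})\times\mathcal{A}(B_{n-r})$ is established. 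So you have slightly conflated two consecutive statements: the present one is a bare citation, and the work you sketch belongs to its successor.
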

Using Nielsen transformations we may simplify this generating set so that we obtain the complete algebraic structure of that centralizer and hence a presentation.

\begin{prop} \label{Gurzo}
For $1\le r \le n-1$, the centralizer $C_{B_n}(\Delta_{r}^2)$ is isomorphic to the direct product of the Artin groups of type $A_{r-1}$ and $B_{n-r}$.
In particular, we have:   
$$C_{B_n}(\Delta_{r}^2) = B_r \cdot \langle \bar{b}_{r+1,1}, \sigma_{r+1},\dots,\sigma_{n-1}\rangle \cong \mathcal{A}(A_{r-1}) \times \mathcal{A}(B_{n-r}).$$
\end{prop}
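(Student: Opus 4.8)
The plan is to start from Gurzo's generating set (the preceding Proposition) and proceed in two stages: first eliminate the generators $\bar b_{[r+1,r+j],1}$ with $j\ge 2$ by Nielsen/Tietze transformations, and then show that the surviving generators $\sigma_1,\dots,\sigma_{r-1}$ together with $\bar b_{r+1,1},\sigma_{r+1},\dots,\sigma_{n-1}$ are governed by exactly the Artin presentation of $\mathcal A(A_{r-1})\times\mathcal A(B_{n-r})$.

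For the first stage, I would unwind the definition $\tau_{p,q}=\delta_{p+1}\partial(\delta_{p+1})\cdots\partial^{q-1}(\delta_{p+1})$ to produce, for each $j\ge 2$, a factorisation of the shape $\bar b_{[r+1,r+j],1}=w_j\,\bar b_{r+1,1}\,w_j^{-1}\cdot\bar b_{[r+1,r+j-1],1}$ with $w_j\in\langle\sigma_{r+1},\dots,\sigma_{r+j-1}\rangle$; geometrically, threading the block $\{r+1,\dots,r+j\}$ once around the block $\{1,\dots,r\}$ amounts to threading its last strand around and then threading the shorter block around. A straightforward induction then gives $\bar b_{[r+1,r+j],1}\in\langle\bar b_{r+1,1},\sigma_{r+1},\dots,\sigma_{n-1}\rangle$, so these generators may be dropped and $C_{B_n}(\Delta_r^2)=B_r\cdot\langle\bar b_{r+1,1},\sigma_{r+1},\dots,\sigma_{n-1}\rangle$. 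Determining the words $w_j$ explicitly is the only real bookkeeping involved.

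Now write $G=\langle\bar b_{r+1,1},\sigma_{r+1},\dots,\sigma_{n-1}\rangle$; three things need to be checked. First, $\bar b_{r+1,1}$ commutes with $B_r$: from $\bar b_{r+1,1}=\tau_{r,1}\tau_{1,r}=(\sigma_r\cdots\sigma_1)(\sigma_1\cdots\sigma_r)$ and the classical splitting $\Delta_{r+1}^2=\Delta_r^2\cdot(\sigma_r\cdots\sigma_1)(\sigma_1\cdots\sigma_r)$ of the full twist one gets $\bar b_{r+1,1}=\Delta_r^{-2}\Delta_{r+1}^2$, which commutes with $B_r\le B_{r+1}$ since $\Delta_r^2\in Z(B_r)$ and $\Delta_{r+1}^2\in Z(B_{r+1})$; as $\sigma_{r+1},\dots,\sigma_{n-1}$ plainly commute with $B_r$, the factors $B_r$ and $G$ commute elementwise. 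Second, $\bar b_{r+1,1},\sigma_{r+1},\dots,\sigma_{n-1}$ satisfy the defining relations of $\mathcal A(B_{n-r})$, with $\bar b_{r+1,1}$ playing the short node: the braid relations among the $\sigma$'s hold in $B_n$; $\bar b_{r+1,1}$ commutes with $\sigma_{r+2},\dots,\sigma_{n-1}$ for support reasons; and the length-$4$ relation $\bar b_{r+1,1}\sigma_{r+1}\bar b_{r+1,1}\sigma_{r+1}=\sigma_{r+1}\bar b_{r+1,1}\sigma_{r+1}\bar b_{r+1,1}$ involves only strands $1,\dots,r+2$ and, through $\bar b_{r+1,1}=\Delta_r^{-2}\Delta_{r+1}^2$, boils down to the identity $(\sigma_1\sigma_2)^3=(\sigma_2\sigma_1)^3$ in $B_3$. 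Combined with the standard presentation of $B_r=\mathcal A(A_{r-1})$, this gives a surjection $\Psi\colon\mathcal A(A_{r-1})\times\mathcal A(B_{n-r})\twoheadrightarrow B_r\cdot G=C_{B_n}(\Delta_r^2)$. Third, $\Psi$ is injective. I would obtain this from the standard injectivity of cabling homomorphisms: cabling the core strand of the annular braid group $\mathcal A(B_{n-r})$ — the braids of the $(n-r+1)$-punctured disc fixing the first puncture — into $r$ parallel strands is an injective homomorphism $\mathcal A(B_{n-r})\to B_n$ taking the short node to $\bar b_{r+1,1}$ and the $i$-th braid generator to $\sigma_{r+i}$; its image is supported away from the cable, so a short support argument yields $B_r\cap G=\{1\}$, and since the two factors commute this forces $\Psi$ to be injective. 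Being also onto $C_{B_n}(\Delta_r^2)$, $\Psi$ is the desired isomorphism.

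The main obstacle is precisely this last point: the Nielsen reduction, the elementary commutation relations, and the two short braid identities are all mechanical, whereas excluding any further relations — equivalently, the injectivity of the cabling homomorphism — is the one step with genuine content. If one insists on an argument entirely internal to combinatorial group theory, the alternative is to exhibit directly the iterated semidirect product of free groups underlying $G$ and match it with the corresponding fibration decomposition of $\mathcal A(B_{n-r})$; but this essentially re-imports the same geometric input.
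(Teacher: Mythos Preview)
Your proposal is correct and follows essentially the same two-stage strategy as the paper: a Nielsen reduction to the generating set $\{\bar b_{r+1,1},\sigma_{r+1},\dots,\sigma_{n-1}\}$, followed by identifying $\langle\bar b_{r+1,1},\sigma_{r+1},\dots,\sigma_{n-1}\rangle$ with $\mathcal A(B_{n-r})$ via a cabling/decabling argument whose injectivity is taken as a standard fact. The only cosmetic differences are that the paper routes the Nielsen step through the full twists $\Delta_{r+l}^2$ (using $\Delta_{r+l}^2=\bar b_{[r+1,r+l],1}\Delta_r^2\partial^r(\Delta_l^2)$ and then $\Delta_{r+l}^2=\Delta_r^2\bar b_{r+1,1}\cdots\bar b_{r+l,1}$) rather than your direct recursion, and that it phrases the isomorphism via the strand-forgetting map $\eta$ onto $\langle\sigma_1^2,\sigma_2,\dots,\sigma_{n-r}\rangle\le B_{n-r+1}$ rather than your cabling in the opposite direction.
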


\begin{proof}
For all $2\le l \le n-r$, we have $\Delta _{r+l}^2=\bar{b}_{[r+1,r+l],1} \cdot \Delta _r^2 \cdot \partial ^r(\Delta _l^2)$ (see Figure \ref{br+1r+l1Delta2}).

\begin{figure}[!ht]
\epsfysize 4cm
\epsfbox{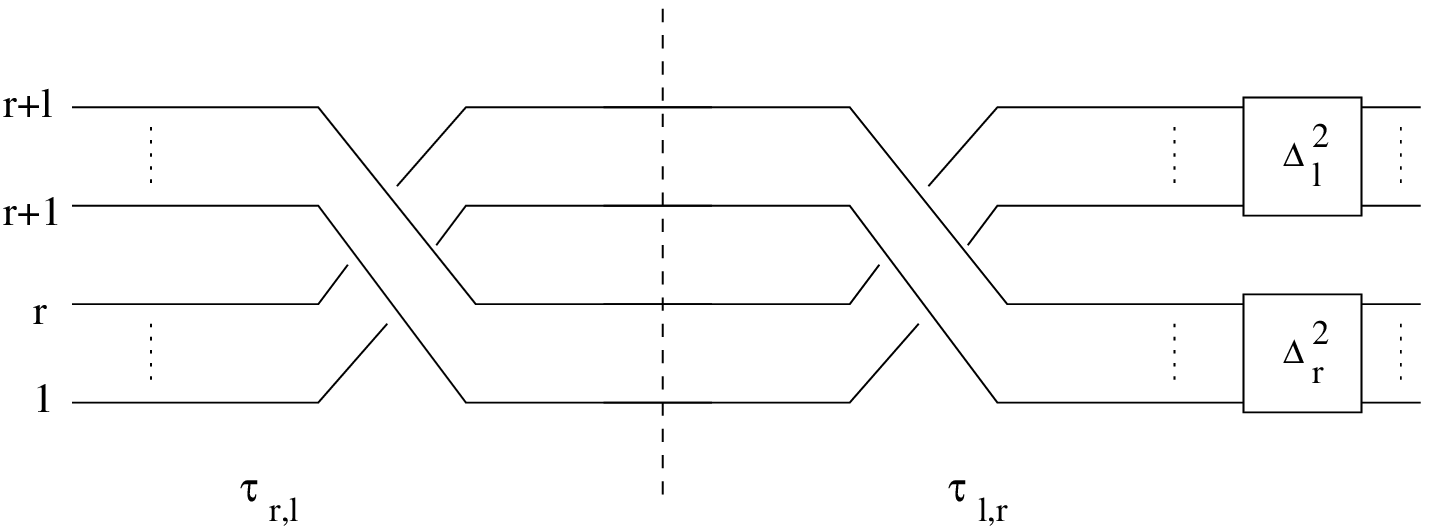}
\caption{Decompose $\Delta _{r+l}^2$ as $\bar{b}_{[r+1,r+l],1} \cdot \Delta _r^2 \cdot \partial ^r(\Delta _l^2)$.} \label{br+1r+l1Delta2}
\end{figure}

Since $\Delta _r^2 \in B_r$ and $\partial ^r(\Delta _l^2) \in \langle \sigma _{r+1}, \ldots , \sigma _{n-1} \rangle $, we may replace, for  $2 \le l \le n-r$,
the elements $\bar{b}_{[r+1,r+l],1}$ by the elements $\Delta _{r+l}^2$ in the generating set. Thus, we get:
$$C_{B_n}(\Delta_{r}^2) = B_r \cdot \langle \sigma_{r+1},\dots,\sigma_{n-1}, \bar{b}_{r+1,1}, \Delta _{r+2}^2, \ldots , \Delta _n^2 \rangle.$$
Starting with $\Delta _{r+1}^2=\Delta _r^2  \bar{b}_{r+1,1}$ (see Figure \ref{Delta2r+l}), 
we may prove by induction that, for $2 \le l \le n-r$,
\[ \Delta _{r+l}^2=\Delta _r^2  \bar{b}_{r+1,1} \bar{b}_{r+2,1} \cdots  \bar{b}_{r+l,1}, \]

\begin{figure}[!ht]
\begin{minipage}{.4\textwidth}
  \epsfysize 3cm
  \epsfbox{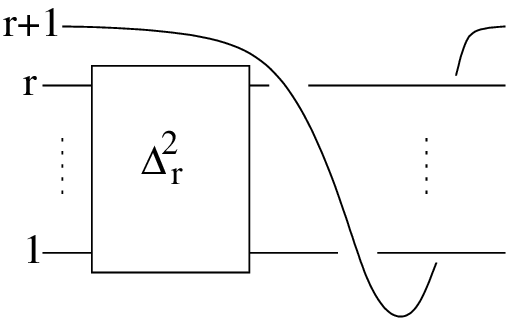}
%  \caption{Decomposition $\Delta _{r+1}^2=\Delta _r^2  \bar{b}_{r+1,1}$} \label{Delta2r+1}
\end{minipage}%
\begin{minipage}{.5\textwidth}
  \epsfysize 3cm
  \epsfbox{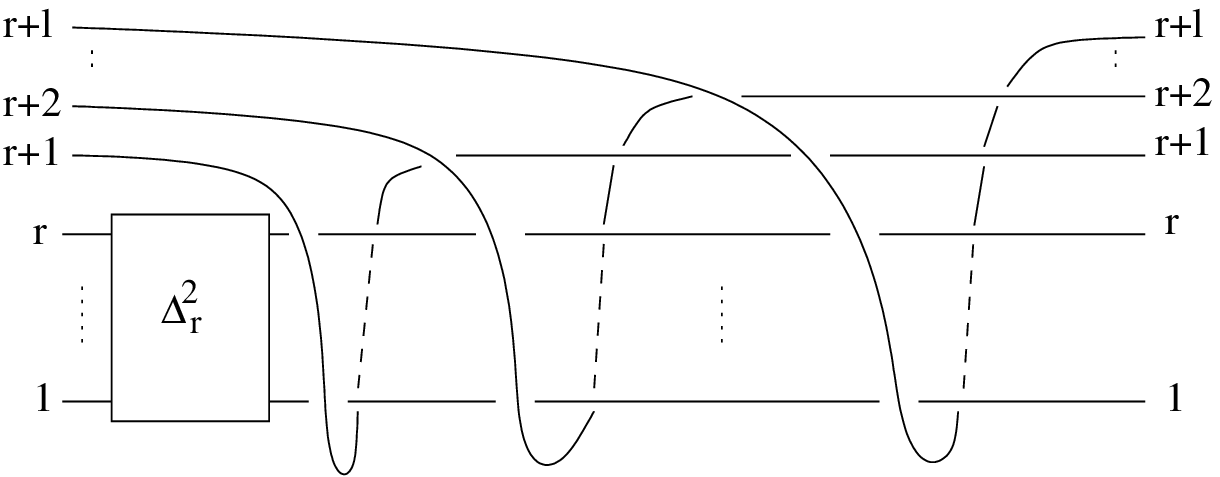}
\end{minipage}
  \caption{$\Delta _{r+1}^2=\Delta _r^2  \bar{b}_{r+1,1}$ \,\, and \,\, $\Delta _{r+l}^2=\Delta _r^2  \bar{b}_{r+1,1} \bar{b}_{r+2,1} \cdots  \bar{b}_{r+l,1}$.} \label{Delta2r+l}
\end{figure}

where all factors commute on the right hand side.
Furthermore, starting with $ \bar{b}_{r+2,1}=\sigma _{r+1} \cdot \bar{b}_{r+1,1} \cdot \sigma _{r+1}$, we may prove by induction that, for $2\le l \le n-r$ (see Figure \ref{br+l1}):
\[  \bar{b}_{r+l,1}= \sigma _{r+l-1} \cdots \sigma _{r+1} \cdot \bar{b}_{r+1,1} \cdot \sigma _{r+1} \cdots \sigma _{r+l-1}. \]

\begin{figure}[!ht]
  \epsfysize 4cm
  \epsfbox{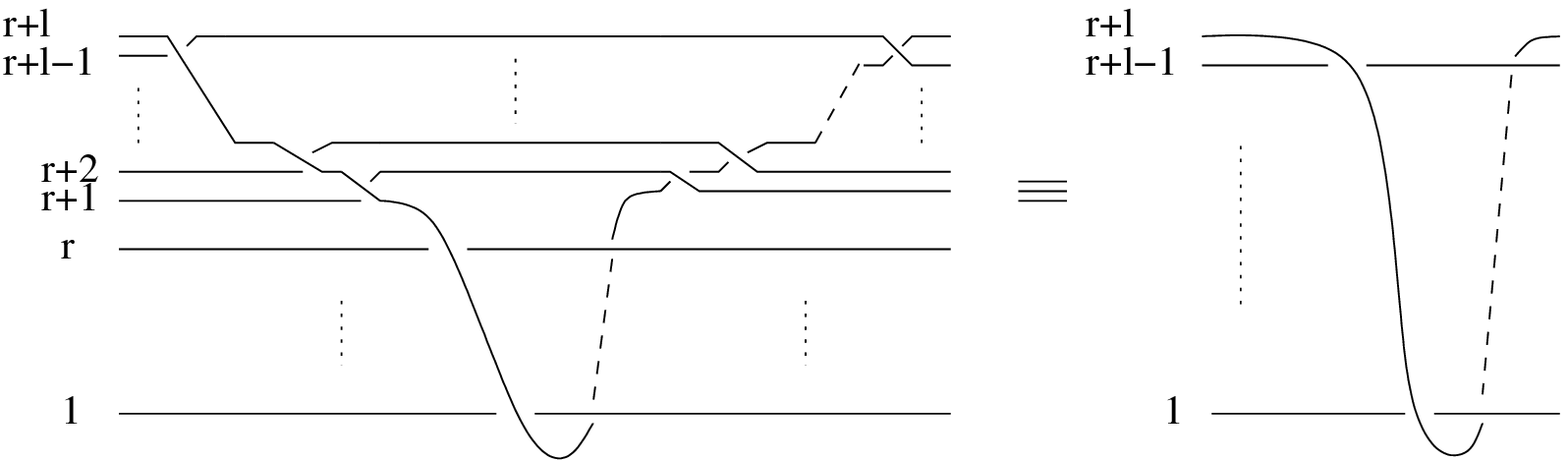}
  \caption{$\bar{b}_{r+l,1}= \sigma _{r+l-1} \cdots \sigma _{r+1} \cdot \bar{b}_{r+1,1} \cdot \sigma _{r+1} \cdots \sigma _{r+l-1}$} \label{br+l1}
\end{figure}

Thus, we may express, for all $2 \le l \le n-r$, the elements $\Delta _{r+l}^2$ as words over $ \bar{b}_{r+1,1}$, $\sigma _{r+1}, \ldots , \sigma _{n-1}$ only,
and hence, we may eliminate them from the generating set. Therefore, we have proven that:
\[C_{B_n}(\Delta_{r}^2) = B_r \cdot \langle \bar{b}_{r+1,1}, \sigma_{r+1},\dots,\sigma_{n-1}\rangle .\]
Consider the map $B_n \longrightarrow B_{n-r+1}$ which removes all but one of the strands $1, \ldots ,r$, say, all except for strand $1$.
This map is not a homomorphism, but the restriction $\eta : \langle  \bar{b}_{r+1,1}, \sigma_{r+1},\dots,\sigma_{n-1}\rangle  \longrightarrow B_{n-r+1}$
is an injective homomorphism with image $\langle \sigma _1^2, \sigma _2, \ldots , \sigma _{n-r} \rangle .$
Consider the Artin group $\mathcal{A}(B_{n-r})$ generated by $s_1, \ldots , s_{n-r}$ where $s_1$ and $s_2$ satisfy the 4-relation.
A standard embedding $\imath $ of this $B$-type Artin group into the braid group $B_{n-r+1}$ is given by $s_1 \mapsto \sigma _1^2$ and $s_i \mapsto \sigma _i$ for
$2 \le i \le n-r$. Hence $\eta ^{-1} \circ \imath : \mathcal{A}(B_{n-r}) \longrightarrow  \langle \bar{b}_{r+1,1}, \sigma_{r+1},\dots,\sigma_{n-1}\rangle $
is an isomorphism. Since $B_r \cong \mathcal{A}(A_{r-1})$ commutes with $\langle \bar{b}_{r+1,1}, \sigma_{r+1},\dots,\sigma_{n-1}\rangle $, we conclude that:
\[ C_{B_n}(\Delta _r^2) = B_r \cdot \eta ^{-1} \circ \imath (\mathcal{A}(B_{n-r})) \cong \mathcal{A}(A_{r-1}) \times \mathcal{A}(B_{n-r}). \]
\end{proof}

\begin{rem}
Since it it obvious which relations are fulfilled, we will call in the sequel $B_r \cdot \langle \bar{b}_{r+1,1}, \sigma_{r+1},\dots,\sigma_{n-1}\rangle $ \emph{Gurzo's presentation} of $C_{B_n}(\Delta_{r}^2)$.
\end{rem}

%-----------------------------------------------------------------------------------
\subsection{The subgroup conjugacy problem for $B_{n-2}$ in $B_n$}   \label{nMin2}

We start with the following result concerning the centralizer of $\Delta_{n-2}^2$:

\begin{lem}\label{lemma1}
\[ B_{n-1} \cap C_{B_n}(\Delta_{n-2}^2) =B_{n-2} \cdot \langle \bar{b}_{n-1,1} \rangle = B_{n-2} \cdot \langle \Delta_{n-1}^2 \rangle. \]
\end{lem}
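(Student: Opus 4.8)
The plan is to compute the intersection $B_{n-1}\cap C_{B_n}(\Delta_{n-2}^2)$ by intersecting $B_{n-1}$ with the explicit generating set for $C_{B_n}(\Delta_{n-2}^2)$ furnished by Proposition \ref{Gurzo}. With $r=n-2$ that proposition gives $C_{B_n}(\Delta_{n-2}^2)=B_{n-2}\cdot\langle \bar b_{n-1,1},\sigma_{n-1}\rangle$, which by the isomorphism in Proposition \ref{Gurzo} is a direct product $\mathcal A(A_{n-3})\times\mathcal A(B_2)$, the second factor being generated by $\bar b_{n-1,1}$ and $\sigma_{n-1}$ subject to the $4$-relation. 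So I would first observe the ``$\supseteq$'' inclusions: $B_{n-2}\le B_{n-1}$ obviously, and $\bar b_{n-1,1}=\tau_{n-2,1}\tau_{1,n-2}\in B_{n-1}$ since it involves only the first $n-1$ strands (it moves strand $n-1$ around strands $1,\dots,n-2$ and leaves strand $n$ alone — this is exactly what the picture in Figure \ref{bk+11} shows). Likewise $\Delta_{n-1}^2\in B_{n-1}$, and $\Delta_{n-1}^2=\Delta_{n-2}^2\,\bar b_{n-1,1}$ (the $l=1$ case of the induction in the proof of Proposition \ref{Gurzo}, i.e. Figure \ref{Delta2r+l}), so $\langle\bar b_{n-1,1}\rangle=\langle\Delta_{n-1}^2\rangle$ modulo $B_{n-2}$; hence $B_{n-2}\cdot\langle\bar b_{n-1,1}\rangle=B_{n-2}\cdot\langle\Delta_{n-1}^2\rangle\subseteq B_{n-1}\cap C_{B_n}(\Delta_{n-2}^2)$.

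The substance is the reverse inclusion. Take $\gamma\in B_{n-1}\cap C_{B_n}(\Delta_{n-2}^2)$ and write it, using the direct-product structure from Proposition \ref{Gurzo}, as $\gamma=\beta\cdot w$ where $\beta\in B_{n-2}$ and $w$ is a word in $\bar b_{n-1,1},\sigma_{n-1}$ — more precisely, $w=\eta^{-1}\circ\imath(v)$ for a unique $v\in\mathcal A(B_2)$. The goal is to show $\sigma_{n-1}$ does not actually occur, i.e. $w\in\langle\bar b_{n-1,1}\rangle$. The clean way is to apply the exponent-sum homomorphism on the last generator. Recall the abelianization-type map $e_{n-1}\colon B_n\to\mathbb Z$ counting the signed exponent of $\sigma_{n-1}$; wait — that is not well defined on $B_n$. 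Instead I would use the permutation / strand argument: the natural projection $B_n\to S_n$ sends $\gamma\in B_{n-1}$ to a permutation fixing $n$; it sends $\bar b_{n-1,1}$ to the identity permutation (it is a pure braid) and $\sigma_{n-1}$ to the transposition $(n-1\ n)$. Since $\beta\in B_{n-2}$ and $\bar b_{n-1,1}$ are pure on strand $n$, the image of $w$ under $B_n\to S_n$ must fix $n$; but the image of $\imath(v)\in\mathcal A(B_2)$ under $\mathcal A(B_2)\to$ (the $B_2$-type Coxeter group) controls the parity of the number of $\sigma_{n-1}$'s, forcing that number to be even. So $v$ lies in the subgroup of $\mathcal A(B_2)$ generated by $s_1$ and $s_2 s_1 s_2^{-1}$-type conjugates — equivalently, in the kernel of the length-mod-$2$ map on $s_2$, which is the normal closure of $s_1$. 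Then I need that this kernel, transported back through $\eta^{-1}\circ\imath$, lands inside $B_{n-2}\cdot\langle\bar b_{n-1,1}\rangle$: using $\bar b_{n-1,1}\mapsto\sigma_1^2$ and the relations of $\mathcal A(B_2)$, the conjugate $\sigma_{n-1}\bar b_{n-1,1}\sigma_{n-1}=\bar b_{n,1}\cdots$ — but $\bar b_{n,1}$ is not in $B_{n-1}$. This is where the $B_{n-1}$-condition does real work a second time.

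The cleanest route, and the one I expect the author takes, is therefore the following streamlined version: work entirely inside $\langle\bar b_{n-1,1},\sigma_{n-1}\rangle\cong\mathcal A(B_2)$ and observe that $B_{n-1}\cap\langle\bar b_{n-1,1},\sigma_{n-1}\rangle=\langle\bar b_{n-1,1}\rangle$, because under the injective homomorphism $\eta$ of Proposition \ref{Gurzo} (deleting strands $2,\dots,n-2$, say) this subgroup maps into $\langle\sigma_1^2,\sigma_2\rangle\le B_3$, the condition $\gamma\in B_{n-1}$ translates to ``strand $n$ is not moved'', which for $\mathcal A(B_2)\cong\langle\sigma_1^2,\sigma_2\rangle$ means the image lies in the subgroup where $\sigma_2$ has even exponent sum; that subgroup is precisely $\langle\sigma_1^2\rangle$ together with $\sigma_2$-conjugates of it, but a short direct check (e.g. via the $B_3$ normal form, or via the surjection $\mathcal A(B_2)\to\mathbb Z$ killing $s_1$ and sending $s_2\mapsto 1$, whose kernel is $\langle\!\langle s_1\rangle\!\rangle$ and which here is cyclic because in $B_3$ the element $\sigma_1^2$ is not normal but its relevant conjugates collapse) shows this forces $v\in\langle s_1\rangle$. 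Hence $w\in\langle\bar b_{n-1,1}\rangle$ and $\gamma\in B_{n-2}\cdot\langle\bar b_{n-1,1}\rangle$. Combined with $\bar b_{n-1,1}=(\Delta_{n-2}^2)^{-1}\Delta_{n-1}^2$ and $\Delta_{n-2}^2\in B_{n-2}$, this also gives the second equality $B_{n-2}\cdot\langle\bar b_{n-1,1}\rangle=B_{n-2}\cdot\langle\Delta_{n-1}^2\rangle$, completing the proof.

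The main obstacle is exactly the parity/permutation bookkeeping in the middle paragraph: making rigorous that the ``$B_{n-1}$'' constraint cuts the $B_2$-type factor $\langle\bar b_{n-1,1},\sigma_{n-1}\rangle$ down to the cyclic group $\langle\bar b_{n-1,1}\rangle$ rather than to some larger $\sigma_{n-1}$-conjugation-invariant subgroup. I expect this to be handled either by a direct normal-form computation in $B_3$ via the homomorphism $\eta$, or — more in the spirit of this elementary section — by the pictures, tracking which strands the generators actually entangle with strand $n$.
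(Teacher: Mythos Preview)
Your overall architecture is exactly the paper's: use Gurzo's presentation with $r=n-2$ to write $C_{B_n}(\Delta_{n-2}^2)=B_{n-2}\cdot\langle \bar b_{n-1,1},\sigma_{n-1}\rangle$, reduce the left equality to $B_{n-1}\cap\langle \bar b_{n-1,1},\sigma_{n-1}\rangle=\langle \bar b_{n-1,1}\rangle$, pass through the strand-deleting map $\eta$ into $\langle \sigma_1^2,\sigma_2\rangle\le B_3$, and get the right equality from $\bar b_{n-1,1}=\Delta_{n-2}^{-2}\Delta_{n-1}^2$. So the plan is right.

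The gap is in how you exploit the hypothesis $\gamma\in B_{n-1}$ after applying $\eta$. You weaken it to the permutation statement ``strand $n$ is fixed'', which inside $\langle \sigma_1^2,\sigma_2\rangle$ only says the $\sigma_2$-exponent sum is even; you then assert that this kernel ``is cyclic'' and hence equals $\langle \sigma_1^2\rangle$. That is false: the kernel of $\mathcal A(B_2)\to\mathbb Z$, $s_1\mapsto 0$, $s_2\mapsto 1$, is the normal closure of $s_1$, and under the embedding into $B_3$ it contains e.g.\ $\sigma_2\sigma_1^2\sigma_2^{-1}$, which is not a power of $\sigma_1^2$. So the parity-of-$\sigma_2$ argument alone cannot finish.

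The paper's fix is a one-line swap of the roles of the two constraints. The condition $\gamma\in B_{n-1}$ is not merely a permutation condition; under $\eta$ it says $\eta(\gamma)\in B_2\le B_3$, hence $\eta(\gamma)=\sigma_1^k$ for some $k\in\mathbb Z$. Now use the \emph{other} constraint, membership in $\langle \sigma_1^2,\sigma_2\rangle$, permutationally: this subgroup is exactly $\{\alpha\in B_3\mid \nu(\alpha)(1)=1\}$ (the annulus braid group), and $\nu(\sigma_1^k)(1)=1$ forces $k$ even. Thus $\eta(\gamma)\in\langle\sigma_1^2\rangle$ and, since $\eta$ is injective on the subgroup in question, $\gamma\in\langle \bar b_{n-1,1}\rangle$. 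In short: look at strand $1$, not strand $3$, and use the full $B_2$-membership rather than only its permutation shadow.
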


\begin{proof}
We start with the left equality. According to Gurzo's presentation (Proposition \ref{Gurzo} for $r=n-2$), we have: 
\[ C_{B_n}(\Delta_{n-2}^2) = B_{n-2} \cdot \langle \bar{b}_{n-1,1}, \sigma_{n-1} \rangle . \]
It suffices to show that $B_{n-1} \cap \langle \bar{b}_{n-1,1}, \sigma_{n-1} \rangle = \langle \bar{b}_{n-1,1} \rangle $.
Indeed, since $ \bar{b}_{n-1,1} \in B_{n-1}$ it suffices to show the inclusion 
$B_{n-1} \cap \langle \bar{b}_{n-1,1}, \sigma_{n-1} \rangle \subseteq \langle \bar{b}_{n-1,1} \rangle $.
Let $\beta \in B_{n-1} \cap \langle \bar{b}_{n-1,1}, \sigma_{n-1} \rangle $. Recall from the proof of Theorem \ref{Gurzo} (for $r=n-2$) the map $\eta $ which removes the strands $2, \ldots , n-2$. Hence, $\eta (\beta ) $ lies in
\[ \eta (B_{n-1} \cap \langle \bar{b}_{n-1,1}, \sigma_{n-1} \rangle ) \subseteq \eta (B_{n-1}) \cap \eta (\langle \bar{b}_{n-1,1}, \sigma_{n-1} \rangle )
= B_2 \cap \langle \sigma _1^2, \sigma_2 \rangle . \]
Now, $\eta (\beta ) \in B_2$ implies that there exists $k\in \mathbb{Z}$ such that $\eta (\beta )=\sigma _1^k$. 
Since $\eta (\beta )$ also lies in $\langle \sigma _1^2, \sigma_2 \rangle $ we may conclude that $k$ is even, i.e., $k=2k'$ for some $k' \in \mathbb{Z}$.
Recall that $\langle \sigma _1^2, \sigma_2 \rangle$ is the braid group on three strands which fixes the first strand, namely 
$\langle \alpha \in B_3 \mid \nu (\alpha )(1)=1 \rangle $ (see Figure \ref{sigma11,2}), 
where $\nu $ denotes the natural homomorphism which maps each braid to its induced permutation on the strands, i.e. $\nu : \sigma _i \mapsto (i,i+1)$. 
Therefore, we may view $\langle \sigma _1^2, \sigma_2 \rangle $ as the 2-strand braid group of the annulus \cite{Cr99}.
However, for odd $k$, we have $\nu (\sigma _1^k)(1)=2$, contradicting $\nu (\beta )(1)=1$. 

\begin{figure}[!ht]
  \epsfysize 2cm
  \epsfbox{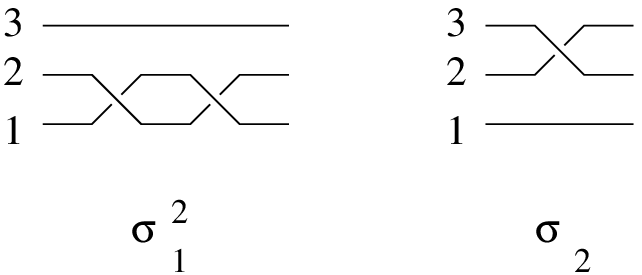}
  \caption{Generators of $\langle \alpha \in B_3 \mid \nu (\alpha )(1)=1 \rangle $} \label{sigma11,2}
\end{figure}

Thus, we have shown that $B_2 \cap \langle \sigma _1^2, \sigma_2 \rangle = \langle \sigma _1^2 \rangle $. For the braids in question, $\eta $ is an isomorphism.
Thus we may apply $\eta ^{-1}$, and we obtain 
\[  B_{n-1} \cap \langle \bar{b}_{n-1,1}, \sigma_{n-1} \rangle \subseteq \eta ^{-1}(\langle \sigma _1^2 \rangle )= \langle \bar{b}_{n-1,1} \rangle , \]
as needed. \par
The right equality follows from simple Nielsen transformations. 
Since $\Delta_{n-1}^2$ generates the center of $B_{n-1}$ and $\Delta_{n-2}^2 \in B_{n-1}$, we can write:
$$B_{n-1} \cap C_{B_n}(\Delta_{n-2}^2) \cong \langle B_{n-2}, \bar{b}_{n-1,1} \rangle \cong \langle B_{n-2}, \bar{b}_{n-1,1}, \Delta_{n-1}^2 \rangle.$$
Now, since $\bar{b}_{n-1,1}=\Delta_{n-1}^2 \Delta_{n-2}^{-2}$ and $\Delta_{n-2}^{-2}\in B_{n-2}$,
we can omit $\bar{b}_{n-1,1}$ from the last presentation, and hence we get:
$$B_{n-1} \cap C_{B_n}(\Delta_{n-2}^2) \cong \langle B_{n-2}, \bar{b}_{n-1,1}, \Delta_{n-1}^2 \rangle \cong \langle B_{n-2}, \Delta_{n-1}^2 \rangle.$$
\end{proof}

\begin{prop}
For all $x,y\in B_n$, the following are equivalent:
\begin{enumerate}
\item There exists $c \in B_{n-2}$ satisfying $y=c^{-1}xc$.
\item There exists $z \in B_n$ satisfying:
    \begin{itemize}
    \item[(a)] $y=z^{-1}xz$
    \item[(b)] $\Delta_{n-1}^2= z^{-1}\Delta_{n-1}^2 z$
    \item[(c)] $\Delta_{n-2}^2= z^{-1}\Delta_{n-2}^2 z$
    \item[(d)] $\sigma_{n-1} = z^{-1} \sigma_{n-1} z.$
    \end{itemize}
\end{enumerate}
\end{prop}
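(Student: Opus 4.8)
The plan is to prove the two implications separately, with the forward direction being essentially trivial and the reverse direction carrying all the weight.

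For the implication $(1)\Rightarrow(2)$, I would simply take $z=c\in B_{n-2}$. Then (a) holds by hypothesis, and (b),(c),(d) all hold because $c\in B_{n-2}$ commutes with $\Delta_{n-1}^2$, with $\Delta_{n-2}^2$, and with $\sigma_{n-1}$: indeed $\Delta_{n-2}^2\in B_{n-2}$ is central in $B_{n-2}$, and $\Delta_{n-1}^2\in Z(B_{n-1})$ commutes with all of $B_{n-1}\supseteq B_{n-2}$, while $\sigma_{n-1}$ commutes with all generators $\sigma_1,\dots,\sigma_{n-3}$ of $B_{n-2}$.

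For the implication $(2)\Rightarrow(1)$, suppose $z\in B_n$ satisfies (a)--(d). Conditions (b), (c), (d) say precisely that $z$ lies in $C_{B_n}(\Delta_{n-1}^2)\cap C_{B_n}(\Delta_{n-2}^2)\cap C_{B_n}(\sigma_{n-1})$. The strategy is to show this intersection equals $B_{n-2}$, so that $z$ itself is the required conjugating element $c$. First, $z\in C_{B_n}(\Delta_{n-1}^2)$ forces $z\in B_{n-1}$: the centralizer of $\Delta_{n-1}^2$ in $B_n$ is $B_{n-1}\cdot\langle\Delta_n^2\rangle$ (Gurzo's presentation, Proposition \ref{Gurzo} with $r=n-1$, noting $\bar b_{n,1}=\Delta_n^2\Delta_{n-1}^{-2}$), but the extra factor $\Delta_n^2=Z(B_n)$-generator would move $\sigma_{n-1}$ unless... — actually more cleanly, $z\in C_{B_n}(\Delta_{n-1}^2)\cap C_{B_n}(\Delta_{n-2}^2)$, and combining with (d) that $z$ centralizes $\sigma_{n-1}$; writing $z=\beta\Delta_n^{2k}$ with $\beta\in B_{n-1}$, the condition $z\sigma_{n-1}z^{-1}=\sigma_{n-1}$ becomes $\beta\sigma_{n-1}\beta^{-1}=\sigma_{n-1}$ (since $\Delta_n^2$ is central), which together with $\beta\in B_{n-1}$ actually forces no further constraint, so one still needs to handle the $\Delta_n^{2k}$ ambiguity — and here one uses that $\Delta_n^2\notin C_{B_n}(\Delta_{n-2}^2)\cdot B_{n-1}$ in the relevant sense, i.e. $z\in C_{B_n}(\Delta_{n-1}^2)\cap C_{B_n}(\Delta_{n-2}^2)$ already implies $k=0$. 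Granting $z\in B_{n-1}$, then $z\in B_{n-1}\cap C_{B_n}(\Delta_{n-2}^2)=B_{n-2}\cdot\langle\Delta_{n-1}^2\rangle$ by Lemma \ref{lemma1}. Write $z=\gamma\Delta_{n-1}^{2j}$ with $\gamma\in B_{n-2}$; since $\Delta_{n-1}^2$ is central in $B_{n-1}$, and $\gamma$ commutes with $\sigma_{n-1}$, condition (d) reads $\Delta_{n-1}^{2j}\sigma_{n-1}\Delta_{n-1}^{-2j}=\sigma_{n-1}$. Now I invoke Lemma \ref{lemma1}'s picture: $\Delta_{n-1}^2=\bar b_{n-1,1}\Delta_{n-2}^2$, and $\bar b_{n-1,1}$ does not commute with $\sigma_{n-1}$ (strand $n$ is genuinely braided around in $\bar b_{n-1,1}$), and in fact no nonzero power of $\Delta_{n-1}^2$ centralizes $\sigma_{n-1}$ — this can be checked via the permutation homomorphism $\nu$ or, more robustly, by passing to the map $\eta$ used in the proof of Lemma \ref{lemma1}, under which $\sigma_{n-1}\mapsto\sigma_2$ and $\Delta_{n-1}^{2}\mapsto$ a power of $\sigma_1^2\sigma_2\cdots$ that manifestly fails to centralize $\sigma_2$. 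Hence $j=0$, so $z=\gamma\in B_{n-2}$, and (a) gives $y=\gamma^{-1}x\gamma$ with $\gamma\in B_{n-2}$, as required.

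The main obstacle is the reverse direction, specifically pinning down exactly that $C_{B_n}(\Delta_{n-1}^2)\cap C_{B_n}(\Delta_{n-2}^2)\cap C_{B_n}(\sigma_{n-1})=B_{n-2}$. The inclusion $\supseteq$ is the $(1)\Rightarrow(2)$ computation; for $\subseteq$ the delicate points are (i) ruling out the central factor $\Delta_n^{2k}$ — this should follow because $\Delta_n^2$ is not in the subgroup generated by $B_{n-1}$ and the remaining centralizer data, equivalently because an element of $C(\Delta_{n-1}^2)\cap C(\Delta_{n-2}^2)$ of the form $\beta\Delta_n^{2k}$ with $\beta\in B_{n-1}$ must have $k=0$ by comparing images under $\eta$ or by a normal-form/exponent-sum argument — and (ii) ruling out the central-in-$B_{n-1}$ factor $\Delta_{n-1}^{2j}$ using condition (d). I expect (ii) to be cleanest via the homomorphism $\eta$ from the proof of Proposition \ref{Gurzo}, and (i) to require a short separate observation; everything else is bookkeeping with Lemma \ref{lemma1} and Proposition \ref{Gurzo}.
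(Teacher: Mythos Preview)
Your forward direction and the overall architecture of the reverse direction match the paper. However, there is a genuine error in your strategy for $(2)\Rightarrow(1)$: you set out to prove that
\[
C_{B_n}(\Delta_{n-1}^2)\cap C_{B_n}(\Delta_{n-2}^2)\cap C_{B_n}(\sigma_{n-1})=B_{n-2},
\]
so that $z$ itself lies in $B_{n-2}$. This is false. The element $\Delta_n^2$ is central in $B_n$, hence it satisfies (b), (c) and (d), yet $\Delta_n^2\notin B_{n-2}$. The correct intersection is $\langle\Delta_n^2\rangle\cdot B_{n-2}$, and in particular your attempts to ``rule out $k\ne 0$'' using (c) or (d) cannot work: $\Delta_n^{2k}$ commutes with $\Delta_{n-2}^2$ and with $\sigma_{n-1}$ for every $k$, so neither condition sees this factor at all. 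Your sentence ``$z\in C_{B_n}(\Delta_{n-1}^2)\cap C_{B_n}(\Delta_{n-2}^2)$ already implies $k=0$'' is simply wrong for the same reason.

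The fix is exactly what the paper does: do not try to kill the central factor. From (b) write $z=\Delta_n^{2p}c$ with $c\in B_{n-1}$; from (c) and Lemma~\ref{lemma1} write $c=\Delta_{n-1}^{2q}c'$ with $c'\in B_{n-2}$; then (d) forces $q=0$ just as you argue (this part of your proof is fine). You are left with $z=\Delta_n^{2p}c'$ with $c'\in B_{n-2}$ and $p$ \emph{arbitrary}. Now condition (a) gives
\[
y=z^{-1}xz=(c')^{-1}\Delta_n^{-2p}x\,\Delta_n^{2p}c'=(c')^{-1}xc',
\]
since $\Delta_n^{2p}$ is central. So the required conjugator is $c'$, not $z$. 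The statement only asks for \emph{some} $c\in B_{n-2}$, not that $z$ itself lie in $B_{n-2}$.
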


\begin{proof}
Since any element in $B_{n-2}$ commutes with $\Delta_{n-1}^2$, $\Delta_{n-2}^2$ and $\sigma_{n-1}$, the implication $(1) \Rightarrow (2)$
is obvious.

Due to Proposition 3 in \cite{KLT09}, Conditions (a) and (b) imply that $z=\Delta_n^{2p} c$ where $c \in B_{n-1}$. Hence, $c=\Delta_n^{-2p}z$.

Condition (c) implies that  $z \in C_{B_n}(\Delta_{n-2}^2)$, hence also $c \in C_{B_n}(\Delta_{n-2}^2)$.
Hence $c \in B_{n-1} \cap C_{B_n}(\Delta_{n-2}^2)$. By Lemma \ref{lemma1}, $c \in \langle B_{n-2}, \Delta_{n-1}^2 \rangle$, so we can write:
$c = \Delta_{n-1}^{2q}c'$ where $c' \in B_{n-2}$. For finishing the proof, we have to show that $q=0$.

We have: $z=\Delta_n^{2p}c =\Delta_n^{2p}\Delta_{n-1}^{2q}c'$. So $\Delta_{n-1}^{2q} =\Delta_n^{-2p}z \cdot (c')^{-1}$. Obviously, $\Delta_n^{-2p} \in C_{B_n}(\sigma_{n-1})$. By Condition (d), we have $z \in C_{B_n}(\sigma_{n-1})$, and by the construction $c' \in B_{n-2}$ and hence $c'\in C_{B_n}(\sigma_{n-1})$. Therefore, $\Delta_{n-1}^{2q} =\Delta_n^{-2p}z \cdot (c')^{-1} \in C_{B_n}(\sigma_{n-1})$. 
It is easy to proof by induction that the left greedy normal forms \cite{Th92, EM94} of $\Delta_{n-1}^{2q} \sigma_{n-1}$ and $\sigma_{n-1} \Delta_{n-1}^{2q}$ are
\[ \underbrace{\Delta_{n-1} \cdots \Delta_{n-1}}_{2q-1 \,\, {\rm factors}} \cdot (\Delta_{n-1}\sigma_{n-1}) \quad {\rm and} \quad
 (\sigma_{n-1} \Delta_{n-1}) \cdot \underbrace{\Delta_{n-1} \cdots \Delta_{n-1}}_{2q-1 \,\, {\rm factors}}, \]
respectively.
We conclude that  $\Delta_{n-1}^{2q} \not\in C_{B_n}(\sigma_{n-1})$ for $q\neq 0$, and since we have  $\Delta_{n-1}^{2q} \in C_{B_n}(\sigma_{n-1})$, it implies that $q=0$, as needed.
\end{proof}

%----------------------------------------------------------------------------------------

\subsection{The double centralizer for a parabolic subgroup of $B_n$}   \label{CCparaH}

Now, we pass to the general case. 
We need the following result concerning the centralizer of $\Delta_{r}^2$:

\begin{lem}\label{claim}
The following holds for all $1 \le r \le n-1$:
\[ B_{r+1} \cap C_{B_n}(\Delta_r^2) =B_r \cdot \langle \bar{b}_{r+1,1} \rangle = B_r \cdot \langle \Delta_{r+1}^2 \rangle. \]
\end{lem}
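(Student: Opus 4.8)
The plan is to mimic the proof of Lemma \ref{lemma1}, which is exactly this statement for the special case $r = n-2$, and to check that the argument goes through verbatim for general $r$. First I would invoke Gurzo's presentation (Proposition \ref{Gurzo}), which gives $C_{B_n}(\Delta_r^2) = B_r \cdot \langle \bar{b}_{r+1,1}, \sigma_{r+1}, \ldots, \sigma_{n-1} \rangle$. Since $B_r \subseteq B_{r+1}$ and $\bar{b}_{r+1,1} \in B_{r+1}$ while $\sigma_{r+1}, \ldots, \sigma_{n-1}$ move strands outside $B_{r+1}$, the intersection $B_{r+1} \cap C_{B_n}(\Delta_r^2)$ ought to equal $B_r \cdot \langle \bar{b}_{r+1,1} \rangle$. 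As in Lemma \ref{lemma1}, the nontrivial inclusion is $B_{r+1} \cap C_{B_n}(\Delta_r^2) \subseteq B_r \cdot \langle \bar{b}_{r+1,1} \rangle$; the reverse inclusion is immediate since both $B_r$ and $\bar{b}_{r+1,1}$ lie in $B_{r+1} \cap C_{B_n}(\Delta_r^2)$.

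For the hard inclusion I would use the (non-homomorphic) strand-removal map that deletes all but one of the strands $1, \ldots, r$ and all but one of the strands $r+1, \ldots, r+1$ — wait, here there is only the single strand $r+1$ above $B_r$, so we just delete strands $2, \ldots, r$. Call the resulting restriction $\eta$; by the proof of Proposition \ref{Gurzo} it is an injective homomorphism on $\langle \bar{b}_{r+1,1}, \sigma_{r+1}, \ldots, \sigma_{n-1} \rangle$ with image $\langle \sigma_1^2, \sigma_2, \ldots, \sigma_{n-r} \rangle \le B_{n-r+1}$. Take $\beta \in B_{r+1} \cap \langle \bar{b}_{r+1,1}, \sigma_{r+1}, \ldots, \sigma_{n-1} \rangle$. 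Then $\eta(\beta) \in \eta(B_{r+1}) \cap \eta(\langle \bar{b}_{r+1,1}, \sigma_{r+1}, \ldots, \sigma_{n-1} \rangle) = B_2 \cap \langle \sigma_1^2, \sigma_2, \ldots, \sigma_{n-r} \rangle$. Now $\eta(\beta) \in B_2$ forces $\eta(\beta) = \sigma_1^k$ for some $k \in \mathbb{Z}$, and the permutation-parity argument from Lemma \ref{lemma1} — viewing $\langle \sigma_1^2, \sigma_2, \ldots, \sigma_{n-r} \rangle$ as the subgroup of $B_{n-r+1}$ of braids fixing strand $1$, so that $\nu(\sigma_1^k)(1) = 1$ requires $k$ even — shows $k = 2k'$, hence $\eta(\beta) \in \langle \sigma_1^2 \rangle$. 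Applying $\eta^{-1}$ on this subgroup gives $\beta \in \langle \bar{b}_{r+1,1} \rangle$, so $B_{r+1} \cap C_{B_n}(\Delta_r^2) \subseteq B_r \cdot \langle \bar{b}_{r+1,1} \rangle$, establishing the first equality.

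For the second equality, $B_r \cdot \langle \bar{b}_{r+1,1} \rangle = B_r \cdot \langle \Delta_{r+1}^2 \rangle$, I would use the same Nielsen-transformation bookkeeping as at the end of Lemma \ref{lemma1}. From the identity $\Delta_{r+1}^2 = \Delta_r^2 \, \bar{b}_{r+1,1}$ (shown in the proof of Proposition \ref{Gurzo}, see Figure \ref{Delta2r+l}) we get $\bar{b}_{r+1,1} = (\Delta_r^2)^{-1} \Delta_{r+1}^2$, and since $\Delta_r^2 \in B_r$ this lets us replace $\bar{b}_{r+1,1}$ by $\Delta_{r+1}^2$ in the generating set $\{B_r, \bar{b}_{r+1,1}\}$, and vice versa. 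Concretely, $\langle B_r, \bar{b}_{r+1,1} \rangle = \langle B_r, \bar{b}_{r+1,1}, \Delta_{r+1}^2 \rangle = \langle B_r, \Delta_{r+1}^2 \rangle$.

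I do not expect a genuine obstacle here: the statement is the natural generalization of Lemma \ref{lemma1} and every ingredient (Gurzo's presentation, the injectivity and image of $\eta$, the permutation-parity trick, and the Nielsen substitution $\bar{b}_{r+1,1} \leftrightarrow \Delta_{r+1}^2$) is already in place. The one point that needs a little care is confirming that the strand-removal restriction $\eta$ really does send $B_{r+1}$ onto $B_2$ — i.e. that after deleting strands $2, \ldots, r$ the ``block'' $\{1, \ldots, r\}$ collapses to a single strand and $\{r+1\}$ remains, so exactly two strands survive inside the $B_{r+1}$ block — and that $\eta$ is injective on the relevant subgroup (not on all of $B_n$); both follow from the setup in the proof of Proposition \ref{Gurzo}, so this is a verification rather than a difficulty.
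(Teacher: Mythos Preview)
Your proposal is correct and follows essentially the same approach as the paper's own proof: invoke Gurzo's presentation, reduce to $B_{r+1}\cap\langle \bar{b}_{r+1,1},\sigma_{r+1},\ldots,\sigma_{n-1}\rangle=\langle \bar{b}_{r+1,1}\rangle$, apply the strand-removal map $\eta$ to land in $B_2\cap\langle\sigma_1^2,\sigma_2,\ldots,\sigma_{n-r}\rangle$, and use the permutation-parity argument to force the exponent even; the second equality is then the Nielsen substitution via $\Delta_{r+1}^2=\Delta_r^2\,\bar{b}_{r+1,1}$. The paper even states explicitly that the proof is a straightforward generalization of Lemma~\ref{lemma1}, which is precisely what you do.
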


\begin{proof}
The proof is a straightforward generalization of the proof of Lemma \ref{lemma1}.
Nevertheless, we provide full details for the convenience of the reader.
\medskip
We start with the left equality. According to Gurzo's presentation (Proposition \ref{Gurzo}), we have: 
\[ C_{B_n}(\Delta_r^2) = B_r \cdot \langle \bar{b}_{r+1,1}, \sigma_{r+1}, \ldots , \sigma _{n-1} \rangle . \]
It suffices to show that $B_{r+1} \cap \langle \bar{b}_{r+1,1}, \sigma_{r+1}, \ldots , \sigma_{n-1} \rangle = \langle \bar{b}_{r+1,1} \rangle $.
Indeed, since $ \bar{b}_{r+1,1} \in B_{r+1}$ it suffices to show the inclusion 
$B_{r+1} \cap \langle \bar{b}_{r+1,1}, \sigma_{r+1}, \ldots , \sigma_{n-1} \rangle \subseteq \langle \bar{b}_{r+1,1} \rangle $.
Let $\beta \in B_{r+1} \cap \langle \bar{b}_{r+1,1}, \sigma_{r+1}, \ldots , \sigma_{n-1} \rangle $. 
Consider the map $\eta $ which removes the strands $2, \ldots , r$. Hence, $\eta (\beta ) $ lies in 
\begin{eqnarray*} 
\eta (B_{r+1} \cap \langle \bar{b}_{r+1,1}, \sigma_{r+1}, \ldots , \sigma_{n-1} \rangle ) & \subseteq & 
\eta (B_{r+1}) \cap \eta (\langle \bar{b}_{r+1,1}, \sigma_{r+1}, \ldots , \sigma_{n-1} \rangle ) \\
&=& B_2 \cap \langle \sigma _1^2, \sigma_2, \ldots , \sigma_{n-r} \rangle . 
\end{eqnarray*}
Now, $\eta (\beta ) \in B_2$ implies that there exists $k\in \mathbb{Z}$ such that $\eta (\beta )=\sigma _1^k$. 
Since $\eta (\beta )$ also lies in $\langle \sigma _1^2, \sigma_2, \ldots , \sigma_{n-r} \rangle $, 
we may conclude that $k$ is even, i.e., $k=2k'$ for some $k' \in \mathbb{Z}$.
Recall, that $\langle \sigma _1^2, \sigma_2, \ldots , \sigma_{n-r} \rangle$ is the braid group on $n-r+1$ strands which fixes the first strand, namely 
$\langle \alpha \in B_{n-r+1} \mid \nu (\alpha )(1)=1 \rangle $, where $\nu $ denotes the natural homomorphism which maps each braid to its induced permutation on the strands, i.e. $\nu : \sigma _i \mapsto (i,i+1)$. Therefore, we may view $\langle \sigma _1^2, \sigma_2 , \ldots , \sigma_{n-r} \rangle $ as the $(n-r)$-strand braid group of the annulus \cite{Cr99}.
However, for odd $k$, we have $\nu (\sigma _1^k)(1)=2$, contradicting $\nu (\beta )(1)=1$. \par
Thus, we have shown that $B_2 \cap \langle \sigma _1^2, \sigma_2 , \ldots , \sigma_{n-r} \rangle = \langle \sigma _1^2 \rangle $. 
For the braids in question, $\eta $ is an isomorphism.
Thus we may apply $\eta ^{-1}$, and we obtain
\[  B_{r+1} \cap \langle \bar{b}_{r+1,1}, \sigma_{r+1}, \ldots , \sigma_{n-1} \rangle \subseteq \eta ^{-1}(\langle \sigma _1^2 \rangle )= \langle \bar{b}_{r+1,1} \rangle ,\]
as needed.
\par
The right equality follows from simple Nielsen transformations.
\end{proof}

\begin{rem}
 Given a group word over $\{ \bar{b}_{r+1,1}, \sigma_{r+1}, \ldots , \sigma_{n-1} \}$ representing an element in $B_{r+1}$, one may find a word over 
$\{ \bar{b}_{r+1,1} \}$ only, either by applying the map $\eta $ as explained above, or one computes the fractional (left) normal form \cite{Th92}
in the $B$-type Artin group $\langle  \bar{b}_{r+1,1}, \sigma_{r+1}, \ldots , \sigma_{n-1} \rangle $.
Fractional normal forms detect the standard parabolic subgroup in which an element of a finite type Artin group "lives".
This generalizes to Garside subgroups of Garside groups \cite{Go07}.
\end{rem}

Lemma \ref{claim} allows us to prove the following crucial result about centralizers:

\begin{prop}\label{nm}
For $1 \leq K \leq n-m$, we have:
$$ \bigcap_{k=1}^{K} \left[ C_{B_n}(\Delta_{n-k}^2) \cap C_{B_n}(\sigma_{n-k+1})\right]= \langle \Delta_n^2,B_{n-K} \rangle,$$
where we define $\sigma_n:=1$ and therefore $C_{B_n}(\sigma_n)=B_n$.
\end{prop}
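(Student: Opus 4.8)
The plan is to proceed by induction on $K$, using Lemma \ref{claim} as the engine. For the base case $K=1$, since $\sigma_n=1$ we have $C_{B_n}(\sigma_n)=B_n$, so the left-hand side is just $C_{B_n}(\Delta_{n-1}^2)$. By Proposition \ref{Gurzo} with $r=n-1$ this equals $B_{n-1}\cdot\langle\bar b_{n,1}\rangle = B_{n-1}\cdot\langle\Delta_n^2\rangle = \langle\Delta_n^2, B_{n-1}\rangle$, using the identity $\bar b_{n,1}=\Delta_n^2\Delta_{n-1}^{-2}$ and $\Delta_{n-1}^{-2}\in B_{n-1}$ (this is exactly the "right equality" type manipulation from Lemma \ref{lemma1}). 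So the base case is immediate.

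For the inductive step, assume $\bigcap_{k=1}^{K-1}[C_{B_n}(\Delta_{n-k}^2)\cap C_{B_n}(\sigma_{n-k+1})] = \langle\Delta_n^2, B_{n-K+1}\rangle$, and intersect both sides with $C_{B_n}(\Delta_{n-K}^2)\cap C_{B_n}(\sigma_{n-K+1})$. I need to show
\[ \langle\Delta_n^2, B_{n-K+1}\rangle \cap C_{B_n}(\Delta_{n-K}^2) \cap C_{B_n}(\sigma_{n-K+1}) = \langle\Delta_n^2, B_{n-K}\rangle. \]
The inclusion $\supseteq$ is clear: $B_{n-K}$ commutes with $\Delta_{n-K}^2$ (as $\Delta_{n-K}^2\in B_{n-K}$) and with $\sigma_{n-K+1}$ (disjoint strands), and $\Delta_n^2$ is central in $B_n$, so it commutes with everything. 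Also $B_{n-K}\le B_{n-K+1}$, giving $\langle\Delta_n^2, B_{n-K}\rangle\subseteq\langle\Delta_n^2,B_{n-K+1}\rangle$. For $\subseteq$, take $x$ in the left-hand set. Write $x=\Delta_n^{2p}\beta$ with $\beta\in B_{n-K+1}$; since $\Delta_n^2$ is central and lies in $C_{B_n}(\Delta_{n-K}^2)\cap C_{B_n}(\sigma_{n-K+1})$, we get $\beta\in B_{n-K+1}\cap C_{B_n}(\Delta_{n-K}^2)\cap C_{B_n}(\sigma_{n-K+1})$. Now apply Lemma \ref{claim} with $r=n-K$: since $B_{n-K+1}\cap C_{B_n}(\Delta_{n-K}^2)=B_{n-K}\cdot\langle\Delta_{n-K+1}^2\rangle$, we may write $\beta=\Delta_{n-K+1}^{2q}\gamma$ with $\gamma\in B_{n-K}$. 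It then remains to show $q=0$: since $\gamma\in B_{n-K}$ commutes with $\sigma_{n-K+1}$ and $\beta$ does too by assumption, we get $\Delta_{n-K+1}^{2q}\in C_{B_n}(\sigma_{n-K+1})$, and exactly the left-greedy-normal-form argument used in the proof of the preceding Proposition (with $n$ replaced by $n-K+1$) forces $q=0$. Hence $\beta=\gamma\in B_{n-K}$ and $x=\Delta_n^{2p}\gamma\in\langle\Delta_n^2,B_{n-K}\rangle$.

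The main obstacle is the final $q=0$ step: one must confirm that the normal-form computation showing $\Delta_{r+1}^{2q}\notin C_{B_n}(\sigma_{r+1})$ for $q\neq 0$ — carried out in the excerpt only for $r+1=n-1$ inside $B_n$ — goes through verbatim for $\Delta_{n-K+1}^{2q}$ and $\sigma_{n-K+1}$ living inside the parabolic $B_{n-K+1}\le B_n$. This is true because the left greedy normal form of an element of $B_{n-K+1}$ is unaffected by the ambient group $B_n$ (standard parabolic subgroups are Garside-compatible), so the identical induction on $q$ applies, giving the normal forms $\Delta_{n-K+1}^{2q-1}\cdot(\Delta_{n-K+1}\sigma_{n-K+1})$ and $(\sigma_{n-K+1}\Delta_{n-K+1})\cdot\Delta_{n-K+1}^{2q-1}$ for the two products, which differ. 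I would also double-check the decomposition $x=\Delta_n^{2p}\beta$ is legitimate — i.e. that every element of $\langle\Delta_n^2,B_{n-K+1}\rangle$ has this form with $\beta$ genuinely in $B_{n-K+1}$ — which follows since $\Delta_n^2$ is central, so the subgroup it generates together with $B_{n-K+1}$ is just the (internal, almost-direct) product $\langle\Delta_n^2\rangle\cdot B_{n-K+1}$.
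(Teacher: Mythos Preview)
Your proof is correct and follows essentially the same approach as the paper: induction on $K$ with Gurzo's presentation for the base case, Lemma~\ref{claim} to peel off a factor $\Delta_{n-K+1}^{2q}$ in the inductive step, and the non-commutation $[\Delta_{n-K+1}^{2q},\sigma_{n-K+1}]\neq 1$ for $q\neq 0$ to force $q=0$. The paper simply asserts this last non-commutation without reproving the normal-form computation, so your extra care there is, if anything, more thorough than the original.
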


\begin{proof}
We prove the theorem by induction on $K$. For $K=1$, according to Gurzo's presentation, we have:
\begin{eqnarray*}
C_{B_n}(\Delta_{n-1}^2) \cap C_{B_n}(\sigma_{n})& =& C_{B_n}(\Delta_{n-1}^2)=
\langle B_{n-1}, \bar{b}_{n,1} \rangle=\\
& =& \langle B_{n-1}, \bar{b}_{n,1} ,\Delta_n^2 \rangle=
\langle \Delta_n^2,B_{n-1} \rangle .
\end{eqnarray*}
The last equality holds since $\bar{b}_{n,1}=\Delta_n^2 \Delta_{n-1}^{-2} \in \langle \Delta_n^2,B_{n-1} \rangle$.

Now, assume that the equality holds for $K$ satisfying $1\leq K <n-m$, and we want to prove it for $K+1$.
We start by proving the following inclusion:
$$ \bigcap_{k=1}^{K+1} \left[ C_{B_n} \left( \Delta_{n-k}^2 \right) \cap C_{B_n}(\sigma_{n-k+1})\right] \subseteq \langle \Delta_n^2 \rangle \cdot B_{n-K-1} $$

We have:
\begin{eqnarray*}
& & \bigcap_{k=1}^{K+1} \left[ C_{B_n}(\Delta_{n-k}^2) \cap C_{B_n}(\sigma_{n-k+1})\right]=\\
& & = \left( \bigcap_{k=1}^{K} \left[ C_{B_n}(\Delta_{n-k}^2) \cap C_{B_n}(\sigma_{n-k+1})\right]\right) \cap C_{B_n}(\Delta_{n-K-1}^2) \cap C_{B_n}(\sigma_{n-K})=\\
& & = \langle \Delta_n^2,B_{n-K} \rangle \cap C_{B_n}(\Delta_{n-K-1}^2) \cap C_{B_n}(\sigma_{n-K}),
\end{eqnarray*}
where the last equality is by the induction hypothesis. Now, if\break
$z \in \bigcap_{k=1}^{K+1} \left[ C_{B_n}(\Delta_{n-k}^2) \cap C_{B_n}(\sigma_{n-k+1})\right]$, then $z \in \langle \Delta_n^2 \rangle \cdot B_{n-K}$, hence there exist $c \in B_{n-K}$ and $p\in \mathbb{Z}$
such that $z=\Delta_n^{2p} c$.

Since $z \in C_{B_n}(\Delta_{n-K-1}^2)$ and $z=\Delta_n^{2p} c$, then $c \in C_{B_n}(\Delta_{n-K-1}^2)$ too.
So $c \in B_{n-K} \cap C_{B_n}(\Delta_{n-K-1}^2) \stackrel{{\rm Lemma} \,\, \ref{claim}}{=}  B_{n-K-1} \cdot \langle \Delta_{n-K}^2 \rangle $.

Hence, there exist $c' \in B_{n-K-1}$ and $q \in \mathbb{Z}$ such that $c=\Delta_{n-K}^{2q} c'$. Therefore, $z=\Delta_n^{2p} c =\Delta_n^{2p} \Delta_{n-K}^{2q} c'$.
Equivalently: \[ \Delta_{n-K}^{2q}=\Delta_n^{-2p}z \cdot (c')^{-1}. \]

Recall again that $z \in  \langle \Delta_n^2,B_{n-K} \rangle \cap C_{B_n}(\Delta_{n-K-1}^2) \cap C_{B_n}(\sigma_{n-K})$, hence: $z \in  C_{B_n}(\sigma_{n-K})$. Also $c' \in B_{n-K-1}$, so: $c' \in  C_{B_n}(\sigma_{n-K})$. Obviously: $\Delta_n^{-2p} \in  C_{B_n}(\sigma_{n-K})$. Therefore: $\Delta_{n-K}^{2q} \in C_{B_n}(\sigma_{n-K})$, and hence $q=0$ (since $[\Delta_{n-K}^{i},\sigma_{n-K}]\neq 1$ for $i \neq 0$).

So we get $z =\Delta_n^{2p} c'$ where $c' \in B_{n-K-1}$. Therefore: $z \in \langle \Delta_n^2 \rangle \cdot B_{n-K-1}$. 
Since $z \in \bigcap_{k=1}^{K+1} \left[ C_{B_n}(\Delta_{n-k}^2) \cap C_{B_n}(\sigma_{n-k+1})\right]$, we get that:
$$ \bigcap_{k=1}^{K+1} \left[ C_{B_n}(\Delta_{n-k}^2) \cap C_{B_n}(\sigma_{n-k+1})\right] \subseteq \langle \Delta_n^2 \rangle \cdot B_{n-K-1}.$$

\medskip

The opposite inclusion is obvious, since every element of $B_{n-K-1}$ and $\Delta_n^2$ commute with $\Delta_{n-k}^2$ and $\sigma_{n-k+1}$ for $1 \leq k \leq K+1$. Therefore:
$$ \bigcap_{k=1}^{K+1} \left[ C_{B_n}(\Delta_{n-k}^2) \cap C_{B_n}(\sigma_{n-k+1})\right] = \langle \Delta_n^2 \rangle \cdot B_{n-K-1}.$$
This completes the induction step.
\end{proof}

\begin{theo} \label{CCBm}
For $1\le m <n$, we have:
\[ C_{B_n}(C_{B_n}(B_m))=\langle \Delta_n^2 \rangle \cdot B_m. \]
\end{theo}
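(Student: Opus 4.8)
The plan is to show the double centralizer $C_{B_n}(C_{B_n}(B_m))$ equals $\langle \Delta_n^2\rangle\cdot B_m$ by sandwiching it between two computable objects. The easy inclusion $\supseteq$ is immediate: $\Delta_n^2$ is central in $B_n$, and $B_m$ clearly commutes with its own centralizer, so $\langle\Delta_n^2\rangle\cdot B_m\subseteq C_{B_n}(C_{B_n}(B_m))$. For the reverse inclusion, the strategy is to exhibit a \emph{small} subset $S\subseteq C_{B_n}(B_m)$ whose joint centralizer is already $\langle\Delta_n^2\rangle\cdot B_m$; then $C_{B_n}(C_{B_n}(B_m))\subseteq C_{B_n}(S)=\langle\Delta_n^2\rangle\cdot B_m$ follows, since enlarging the set one centralizes can only shrink the centralizer.

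The natural candidate for $S$, dictated by Proposition \ref{nm}, is $S=\{\Delta_{m+1}^2,\Delta_{m+2}^2,\dots,\Delta_{n-1}^2,\sigma_{m+2},\sigma_{m+3},\dots,\sigma_{n-1}\}$ (reindexing $K=n-m$, so the elements $\Delta_{n-k}^2$ for $1\le k\le n-m-1$ are $\Delta_{m+1}^2,\dots,\Delta_{n-1}^2$, and the elements $\sigma_{n-k+1}$ for $1\le k\le n-m-1$ are $\sigma_{m+2},\dots,\sigma_n=1$). First I would verify that each of these elements actually lies in $C_{B_n}(B_m)$: every $\sigma_j$ with $j\ge m+2$ is disjoint from the first $m$ strands and hence commutes with $B_m$; and each $\Delta_{m+l}^2$ with $l\ge 1$ is central in $B_{m+l}\supseteq B_m$, so it too commutes with $B_m$. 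Hence $S\subseteq C_{B_n}(B_m)$, giving
\[ C_{B_n}(C_{B_n}(B_m))\;\subseteq\; C_{B_n}(S)\;=\;\bigcap_{k=1}^{n-m}\left[C_{B_n}(\Delta_{n-k}^2)\cap C_{B_n}(\sigma_{n-k+1})\right]\;=\;\langle\Delta_n^2\rangle\cdot B_{m}, \]
where the last equality is exactly Proposition \ref{nm} with $K=n-m$ (note $C_{B_n}(\sigma_n)=C_{B_n}(1)=B_n$ contributes nothing, matching the fact that no generator $\sigma_{m+1}$ appears in $S$). Combining the two inclusions yields the claim.

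The only genuine subtlety is checking that $S$ really is contained in $C_{B_n}(B_m)$ and that the index bookkeeping in Proposition \ref{nm} matches — i.e. that the term $k=n-m$ forces $\Delta_{n-(n-m)}^2=\Delta_m^2\in B_m$ and $\sigma_{n-(n-m)+1}=\sigma_{m+1}$, which is why the intersection lands on $B_m$ rather than on a smaller subgroup; this requires that we run the induction in Proposition \ref{nm} all the way to $K=n-m$ and not stop at $K=n-m-1$. I expect the main (though still modest) obstacle to be purely notational: making sure the reindexing between the ``$\Delta_{n-k}^2,\sigma_{n-k+1}$'' description used in Proposition \ref{nm} and the ``$\Delta_{m+l}^2,\sigma_{m+l+1}$'' description natural for $B_m$ is stated unambiguously, and confirming that no extra generator of $C_{B_n}(B_m)$ (such as the $\bar b$-type elements from Gurzo's presentation) is needed — but that is automatic, since we only need \emph{some} subset $S$ with $C_{B_n}(S)$ already as small as the target, and Proposition \ref{nm} supplies precisely such an $S$.
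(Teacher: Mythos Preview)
Your approach is essentially that of the paper: one exhibits a generating set of $C_{B_n}(B_m)$ consisting precisely of the elements appearing in Proposition~\ref{nm} with $K=n-m$, and concludes. The paper does this in one line by quoting the Fenn--Rolfsen--Zhu presentation
\[
C_{B_n}(B_m)=\langle \Delta_m^2, \Delta_{m+1}^2, \ldots, \Delta_{n-1}^2,\ \sigma_{m+1}, \ldots, \sigma_{n-1}\rangle,
\]
so that $C_{B_n}(C_{B_n}(B_m))$ is \emph{exactly} the intersection in Proposition~\ref{nm}; your two-inclusion sandwich using a subset $S\subseteq C_{B_n}(B_m)$ is logically equivalent.

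There is, however, a genuine indexing slip in your $S$. As written, $S$ omits $\Delta_m^2$ and $\sigma_{m+1}$, but these are precisely the elements contributed by the term $k=n-m$ in Proposition~\ref{nm} (since $\Delta_{n-(n-m)}^2=\Delta_m^2$ and $\sigma_{n-(n-m)+1}=\sigma_{m+1}$). The trivial constraint $C_{B_n}(\sigma_n)=B_n$ occurs at $k=1$, not at $k=n-m$, so your remark that ``no generator $\sigma_{m+1}$ appears in $S$'' is backwards. With the $S$ you actually listed, the intersection only runs over $1\le k\le n-m-1$, and Proposition~\ref{nm} then gives $\langle\Delta_n^2\rangle\cdot B_{m+1}$, which is strictly too large. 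Once you add $\Delta_m^2$ and $\sigma_{m+1}$ to $S$ --- and both do lie in $C_{B_n}(B_m)$, since $\Delta_m^2$ is central in $B_m$ and $\sigma_{m+1}$ commutes with every $\sigma_i$ for $i\le m-1$ --- your displayed chain of equalities becomes correct and coincides with the paper's proof.
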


\begin{proof}
According to \cite{FRZ96}, the centralizer of $B_m$ in $B_n$ is:
\[ C_{B_n}(B_m)=\langle \Delta _m^2, \Delta _{m+1}^2, \ldots , \Delta _{n-1}^2, \sigma _{m+1}, \ldots , \sigma _{n-1}  \rangle .  \]
We conclude that:
\[  C_{B_n}(C_{B_n}(B_m))=\bigcap_{k=1}^{n-m} \left[ C_{B_n}(\Delta_{n-k}^2) \cap C_{B_n}(\sigma_{n-k+1})\right] = \langle \Delta_n^2 \rangle \cdot B_m . \]
The right equality is by Proposition \ref{nm} (where we set $K=n-m$).
\end{proof}

We may extend this result to parabolic subgroups of $B_n$ with a connected associated Coxeter graph, in the following sense of Paris \cite{Pa97}.

\begin{defi}
A subgroup $H$ of the braid group $B_n$ is called {\em parabolic with a connected associated Coxeter graph} if
it is conjugate to $B_{[k,m]}=\langle \sigma _k, \sigma _{k+1}, \ldots , \sigma _{m-1} \rangle $ for some $1\le k<m \le n$.
\end{defi}

\begin{theo}  \label{CCH}
Let $H$ be parabolic subgroup of $B_n$ with a connected associated Coxeter graph such that $\gamma ^{-1}H\gamma =B_{[k,m]}$ for some
$\gamma \in B_n$ and $1\le k < m \le n$. Then the double centralizer of $H$ is given by:
\[ C_{B_n}(C_{B_n}(H))=\langle \Delta _n^2 \rangle \cdot H. \]
\end{theo}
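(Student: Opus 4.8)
The plan is to deduce Theorem~\ref{CCH} from Theorem~\ref{CCBm} by a conjugacy reduction. The first observation is that the operation $H\mapsto C_{B_n}(C_{B_n}(H))$ is equivariant under conjugation: for every $g\in B_n$ one has $C_{B_n}(gHg^{-1})=g\,C_{B_n}(H)\,g^{-1}$, hence $C_{B_n}(C_{B_n}(gHg^{-1}))=g\,C_{B_n}(C_{B_n}(H))\,g^{-1}$, and since $\Delta_n^2$ is central also $g\bigl(\langle\Delta_n^2\rangle\cdot H\bigr)g^{-1}=\langle\Delta_n^2\rangle\cdot gHg^{-1}$. Therefore it suffices to prove the statement for $H=B_{[k,m]}$ itself, and in fact, as explained next, it suffices to prove it for $H=B_r$ with $r=m-k+1$, which is precisely the content of Theorem~\ref{CCBm}.

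The second step is to check that the standard parabolic $B_{[k,m]}$ is conjugate in $B_n$ to $B_r=B_{[1,r]}$ with $r=m-k+1$. This is a classical fact about braid groups; concretely, writing $\xi=\sigma_1\sigma_2\cdots\sigma_{n-1}$, one has the shift identity $\xi\sigma_i\xi^{-1}=\sigma_{i+1}$ for $1\le i\le n-2$, and iterating it $k-1$ times yields $\xi^{k-1}\sigma_i\xi^{-(k-1)}=\sigma_{i+k-1}$ for $1\le i\le n-k$. Since $r-1=m-k\le n-k$, all indices stay in range, so $\xi^{k-1}B_r\xi^{-(k-1)}=\langle\sigma_k,\dots,\sigma_{m-1}\rangle=B_{[k,m]}$. (Equivalently, any braid realizing the permutation that rigidly slides the block of punctures $\{1,\dots,r\}$ onto $\{k,\dots,m\}$ does the job.) Combining this with the first step, $H$ is conjugate to $B_r$, say $H=gB_rg^{-1}$.

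It then remains only to invoke Theorem~\ref{CCBm}. If $r<n$ --- which is automatic unless $(k,m)=(1,n)$ --- we obtain $C_{B_n}(C_{B_n}(H))=g\,C_{B_n}(C_{B_n}(B_r))\,g^{-1}=g\,(\langle\Delta_n^2\rangle\cdot B_r)\,g^{-1}=\langle\Delta_n^2\rangle\cdot gB_rg^{-1}=\langle\Delta_n^2\rangle\cdot H$, as desired. The only remaining possibility is $(k,m)=(1,n)$, i.e.\ $B_{[k,m]}=B_n$ and hence $H=B_n$; here $C_{B_n}(C_{B_n}(B_n))=C_{B_n}(Z(B_n))=B_n=\langle\Delta_n^2\rangle\cdot B_n$ trivially. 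Thus I do not expect a genuine obstacle beyond Theorem~\ref{CCBm}: the only points that need care are the index bookkeeping in the shift identity of the second step and the separate (trivial) treatment of the edge case $H=B_n$. If one prefers, the conjugacy $B_{[k,m]}\sim B_r$ can instead be quoted from the known classification of parabolic subgroups of braid groups up to conjugacy, rather than proved by hand.
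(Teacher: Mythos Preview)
Your proof is correct and follows essentially the same route as the paper's: reduce to Theorem~\ref{CCBm} via the conjugacy-equivariance of centralizers and an explicit conjugacy $B_{[k,m]}\sim B_{m-k+1}$. The paper uses the braid $\tau_{m-k+1,k-1}$ (strands $m-k+2,\dots,m$ crossing over $1,\dots,m-k+1$) instead of your iterated shift $\xi^{k-1}$, and it does not single out the trivial boundary case $(k,m)=(1,n)$; these are cosmetic differences only.
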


\begin{proof}
Recall that $\tau _{m-k+1,k-1}$ is the braid satisfying that the strands $m-k+2, \ldots , m$ cross over the strands $1, \ldots , m-k+1$ (see Figure \ref{taupq}).
Therefore, $$B_{[k,m]}=\tau _{m-k+1,k-1}B_{m-k+1} \tau _{m-k+1,k-1}^{-1},$$ 
and we conclude that
$H=\gamma \tau _{m-k+1,k-1}B_{m-k+1} \tau _{m-k+1,k-1}^{-1} \gamma ^{-1}$.
Since $C_G(gHg^{-1})=gC_G(H)g^{-1}$ for any $g\in G$, for a group $G$ and $H\le G$, an application of Theorem \ref{CCBm} leads to the assertion.
\end{proof}

%------------------------------------------------------------------------------------
\subsection{The subgroup conjugacy problem for parabolic subgroups}  \label{subCPpara}

We apply the results of the preceding section to reduce the subgroup conjugacy problem to an instance of the simultaneous conjugacy problem.

\begin{theo} \label{Reduce}
Let $G$ be a group and $H\le G$ such that $C_G(C_G(H)) =Z(G) \cdot H$ where $Z(G)$ denotes the center of $G$.
Furthermore, let $\{g_1, \ldots , g_l\}$ be a generating set of $C_G(H)$. Then, for $x, y \in G$, the following are equivalent:
\begin{enumerate}
\item There exists $c \in H$ satisfying $y=c^{-1}xc$.
\item There exists $c' \in G$ satisfying 
    \begin{itemize}
    \item[(a)] $y=c'^{-1}xc'$, and
    \item[(b)] $g_i= c'^{-1} g_i c'$ for all $1 \leq i \leq l$.
    \end{itemize}
\end{enumerate}
\end{theo}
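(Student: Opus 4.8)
The plan is to prove the equivalence by a direct argument using the hypothesis $C_G(C_G(H)) = Z(G)\cdot H$ together with the fact that $\{g_1,\dots,g_l\}$ generates $C_G(H)$.

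The implication $(1)\Rightarrow(2)$ is immediate: if $c\in H$ conjugates $x$ to $y$, take $c'=c$. Condition (a) holds by assumption, and condition (b) holds because $c\in H$ commutes with every element of $C_G(H)$, in particular with each $g_i$.

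For $(2)\Rightarrow(1)$, suppose $c'\in G$ satisfies (a) and (b). From (b), $c'$ commutes with each generator $g_i$ of $C_G(H)$, hence $c'$ commutes with all of $C_G(H)$, i.e.\ $c'\in C_G(C_G(H))$. By hypothesis this means $c'\in Z(G)\cdot H$, so we may write $c'=zc$ with $z\in Z(G)$ and $c\in H$. Now substitute into (a): since $z$ is central, $c'^{-1}xc' = c^{-1}z^{-1}xzc = c^{-1}xc$, so $y = c^{-1}xc$ with $c\in H$, which is exactly (1).

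I do not expect any serious obstacle here; the statement is essentially a formal unwinding of the definition of the double centralizer combined with the given characterization $C_G(C_G(H))=Z(G)\cdot H$. The only point requiring a moment's care is the step in $(2)\Rightarrow(1)$ where one passes from "$c'$ commutes with each $g_i$" to "$c'\in C_G(C_G(H))$": this uses that the $g_i$ \emph{generate} $C_G(H)$, so that commuting with the generators forces commuting with the whole subgroup. The application to braid groups is then obtained by combining this theorem with Theorem \ref{CCH} (which supplies the hypothesis $C_{B_n}(C_{B_n}(H)) = Z(B_n)\cdot H$, recalling $Z(B_n)=\langle\Delta_n^2\rangle$) and with an explicit generating set of $C_{B_n}(H)$ coming from Gurzo's presentation, reducing the subgroup conjugacy problem for $H$ to the simultaneous conjugacy problem for the finite tuple $(x,g_1,\dots,g_l)$.
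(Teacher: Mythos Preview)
Your proof is correct and follows essentially the same approach as the paper: both directions are handled identically, with $(1)\Rightarrow(2)$ by taking $c'=c$, and $(2)\Rightarrow(1)$ by using that commutation with the generators $g_i$ forces $c'\in C_G(C_G(H))=Z(G)\cdot H$, then writing $c'=zc$ and cancelling the central factor. The additional commentary on the application to braid groups is also accurate and mirrors how the paper deploys the theorem.
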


\begin{proof}
$(1) \Rightarrow (2)$: Set $c'=c\in H \le G$. Then $c'$ commutes with all elements in $C_G(H)$. \\
$(2) \Rightarrow (1)$: Conditions (b$_i$) implies that $c'$ commutes with all generators of   $C_G(H)$. Therefore, $z\in C_G(C_G(H))=Z(G) \cdot H$, and we may
write $c'=zc$ for some $z\in Z(G)$ and $c\in H$. From Condition (a), we conclude that: $y=c'^{-1}xc'=c^{-1}z^{-1}xzc=c^{-1}xc$.
\end{proof}

\begin{rem} 
In general, for any pair $(G,H)$, where $G$ is a group and $H \le G$, we have $C_G(C_G(H)) \supset Z(G) \cdot H$. An example of a proper inclusion is the pair $(F_n,F_m)$ for $m<n$, where we have $C_G(C_G(H)) =G$, since $C_G(H)=\{1\}$.
\end{rem}

Now, we may reduce the subgroup conjugacy problem for $B_m$ in $B_n$ (for $m<n$) to an instance of the simultaneous conjugacy problem:
\begin{coro}
Let $m<n$. For all $x,y\in B_n$, the following are equivalent:
\begin{enumerate}
\item There exists $c \in B_{n-m}$ satisfying $y=c^{-1}xc$.
\item There exists $z \in B_n$ satisfying
    \begin{itemize}
    \item[(a)] $y=z^{-1}xz$,
    \item[(b)] $\Delta_{n-i}^2= z^{-1}\Delta_{n-i}^2 z$  for all $1 \leq i \leq n-m$,
    \item[(c)] $\sigma_{n-i+1} = z^{-1} \sigma_{n-i+1} z,$  for all $2 \leq i \leq n-m$.
    \end{itemize}
\end{enumerate}
\end{coro}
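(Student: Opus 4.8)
The plan is to read this off from Theorem~\ref{Reduce}, specialized to $G=B_n$ and $H=B_m$. To do so I first need the hypothesis of that theorem, $C_{B_n}(C_{B_n}(B_m))=Z(B_n)\cdot B_m$; this is exactly Theorem~\ref{CCBm}, which gives $C_{B_n}(C_{B_n}(B_m))=\langle\Delta_n^2\rangle\cdot B_m$, together with the classical fact $Z(B_n)=\langle\Delta_n^2\rangle$. Next I would pin down a finite generating set of $C_{B_n}(B_m)$, for which I would quote \cite{FRZ96}:
\[
C_{B_n}(B_m)=\langle\,\Delta_m^2,\Delta_{m+1}^2,\ldots,\Delta_{n-1}^2,\ \sigma_{m+1},\ldots,\sigma_{n-1}\,\rangle .
\]
The one bookkeeping point is that, re-indexing via $k=n-i$ in the $\Delta$'s and $k=n-i+1$ in the $\sigma$'s, this set is exactly $\{\Delta_{n-i}^2:1\le i\le n-m\}\cup\{\sigma_{n-i+1}:2\le i\le n-m\}$, that is, precisely the list of elements appearing in conditions~(b) and~(c).

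With this set-up in place, I would simply apply Theorem~\ref{Reduce} with this $G$, this $H$, and these generators $g_1,\ldots,g_l$. Its condition~(1) is condition~(1) here; its condition~(2) requires the conjugating element (there called $c'$, here called $z$) to satisfy $y=c'^{-1}xc'$ and to commute with every $g_i$, and by the previous paragraph this last requirement is precisely the conjunction of~(b) and~(c). Hence the stated equivalence follows at once; the nontrivial direction runs as in Theorem~\ref{Reduce}: conditions (b) and (c) put $z$ in $C_{B_n}(C_{B_n}(B_m))=\langle\Delta_n^2\rangle\cdot B_m$, so $z=\Delta_n^{2p}c$ with $c\in B_m$, and then (a) gives $y=z^{-1}xz=c^{-1}xc$ because $\Delta_n^{2p}$ is central.

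I do not expect a real obstacle: the genuine content sits in Proposition~\ref{nm} and Theorem~\ref{CCBm} (the double-centralizer computation) and in Theorem~\ref{Reduce} (the abstract reduction to a simultaneous conjugacy problem). The only care needed is clerical, namely checking that the index ranges $1\le i\le n-m$ and $2\le i\le n-m$ in (b)--(c) reproduce exactly the Fenn--Rolfsen--Zhu generators $\Delta_m^2,\dots,\Delta_{n-1}^2$ and $\sigma_{m+1},\dots,\sigma_{n-1}$ of $C_{B_n}(B_m)$, and recalling $Z(B_n)=\langle\Delta_n^2\rangle$ so that the central factor in Theorem~\ref{Reduce}'s conclusion can be absorbed into the conjugacy.
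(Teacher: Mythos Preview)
Your proposal is correct and matches the paper's own proof line for line: both apply Theorem~\ref{Reduce} with $G=B_n$ and $H=B_m$, invoking Theorem~\ref{CCBm} for the double-centralizer hypothesis, the Fenn--Rolfsen--Zhu generating set for $C_{B_n}(B_m)$, and $Z(B_n)=\langle\Delta_n^2\rangle$. (As both your argument and the paper's make clear, condition~(1) of the corollary should read $c\in B_m$ rather than $c\in B_{n-m}$; this is a typo in the statement.)
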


\begin{proof}
The proof is just an application of Theorem \ref{Reduce} using the presentation
\[ C_{B_n}(B_m)=\langle \Delta _m^2, \Delta _{m+1}^2, \ldots , \Delta _{n-1}^2, \sigma _{m+1}, \ldots , \sigma _{n-1}  \rangle   \]
from \cite{FRZ96}. Also recall that $Z(B_n)=\langle \Delta _n^2 \rangle $ \cite{Ch48}.
\end{proof}

Slightly more general, we may apply that reduction to  parabolic subgroups of $B_n$ with a connected associated Coxeter graph:

\begin{coro} \label{subCPHRed}
Let $H$ be a parabolic subgroup of $B_n$ with a connected associated Coxeter graph such that $\gamma ^{-1}H\gamma =B_{[k,m]}$ for some
$\gamma \in B_n$ and $1\le k < m \le n$.
Then for all $x,y\in B_n$, the following are equivalent:
\begin{enumerate}
\item There exists $c \in H$ satisfying $y=c^{-1}xc$.
\item There exists $z \in B_n$ satisfying
    \begin{itemize}
    \item[(a)] $y=z^{-1}xz$,
    \item[(b)] {\tiny $\gamma \tau _{m-k+1,k-1}  \Delta_{n-i}^2 \tau _{m-k+1,k-1}^{-1} \gamma ^{-1}=
             z^{-1} (\gamma \tau _{m-k+1,k-1}  \Delta_{n-i}^2 \tau _{m-k+1,k-1}^{-1}  \gamma ^{-1}) z$},
             for all $1 \leq i \leq n-m+k-1$,
    \item[(c)] {\tiny $\gamma \tau _{m-k+1,k-1}  \sigma_{n-i+1} \tau _{m-k+1,k-1}^{-1}  \gamma ^{-1}=
                z^{-1} (\gamma \tau _{m-k+1,k-1}  \sigma_{n-i+1} \tau _{m-k+1,k-1}^{-1}) z$},
                for all $2 \leq i \leq n-m+k-1$.
    \end{itemize}
\end{enumerate}
\end{coro}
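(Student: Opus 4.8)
The plan is to deduce this from Theorem \ref{Reduce} exactly as the previous corollary (the case $H = B_m$) was deduced, but now carrying along the conjugating element $\gamma\tau_{m-k+1,k-1}$ that moves $B_{[k,m]}$ into standard position. First I would record, as in the proof of Theorem \ref{CCH}, that $H = \gamma\tau_{m-k+1,k-1}\, B_{m-k+1}\,\tau_{m-k+1,k-1}^{-1}\gamma^{-1}$; write $g := \gamma\tau_{m-k+1,k-1}$ for brevity. Then Theorem \ref{CCBm} (applied with the ambient group $B_n$ and the standard parabolic $B_{m-k+1}$, noting $m-k+1 < n$) together with the conjugation identity $C_{B_n}(gKg^{-1}) = g\,C_{B_n}(K)\,g^{-1}$ gives $C_{B_n}(C_{B_n}(H)) = g\langle\Delta_n^2\rangle B_{m-k+1} g^{-1} = \langle\Delta_n^2\rangle\cdot H$, using that $g\Delta_n^2 g^{-1} = \Delta_n^2$ since $\Delta_n^2$ is central. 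This verifies the hypothesis $C_{B_n}(C_{B_n}(H)) = Z(B_n)\cdot H$ of Theorem \ref{Reduce}.

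Next I would exhibit an explicit generating set of $C_{B_n}(H)$. Since $C_{B_n}(B_{m-k+1}) = \langle \Delta_{m-k+1}^2,\dots,\Delta_{n-1}^2,\sigma_{m-k+2},\dots,\sigma_{n-1}\rangle$ by \cite{FRZ96}, conjugating by $g$ gives
\[
C_{B_n}(H) \;=\; g\,\langle \Delta_{m-k+1}^2,\dots,\Delta_{n-1}^2,\ \sigma_{m-k+2},\dots,\sigma_{n-1}\rangle\, g^{-1}.
\]
Reindexing the $\Delta^2$-generators as $\Delta_{n-i}^2$ for $1\le i\le n-(m-k+1) = n-m+k-1$, and the $\sigma$-generators as $\sigma_{n-i+1}$ for $2\le i \le n-m+k-1$, the generating set of $C_{B_n}(H)$ becomes exactly $\{\,g\,\Delta_{n-i}^2\,g^{-1}\,\}\cup\{\,g\,\sigma_{n-i+1}\,g^{-1}\,\}$ with $g = \gamma\tau_{m-k+1,k-1}$, which are the elements appearing in conditions (b) and (c) of the statement.

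Finally I would apply Theorem \ref{Reduce} with $G = B_n$, this $H$, and the generating set just described: the theorem says there exists $c\in H$ with $y = c^{-1}xc$ if and only if there exists $z\in B_n$ with $y = z^{-1}xz$ and $z$ commuting with each of the listed generators of $C_{B_n}(H)$. Translating "commutes with $g\,\Delta_{n-i}^2\,g^{-1}$" into the displayed equation $g\,\Delta_{n-i}^2\,g^{-1} = z^{-1}(g\,\Delta_{n-i}^2\,g^{-1})z$, and similarly for the $\sigma$ generators, yields conditions (a), (b), (c) verbatim, and recalling $Z(B_n) = \langle\Delta_n^2\rangle$ \cite{Ch48} closes the argument. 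There is essentially no hard step here — the only point requiring a little care is the bookkeeping of the index ranges (that the $n-m+k-1$ many $\Delta^2$-generators and the $\sigma$-generators of $C_{B_n}(B_{m-k+1})$ reindex correctly after conjugation by $g$), and checking that $g$ is a genuine element of $B_n$ so that the conjugation identities for centralizers apply; both are routine.
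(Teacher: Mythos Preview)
Your proposal is correct and follows exactly the approach the paper intends: the paper's own proof is the single line ``This is a straightforward consequence of Theorem \ref{Reduce} and Theorem \ref{CCH}'', and you have simply unpacked that by writing $H=gB_{m-k+1}g^{-1}$ with $g=\gamma\tau_{m-k+1,k-1}$, invoking the double-centralizer result, conjugating the Fenn--Rolfsen--Zhu generating set of $C_{B_n}(B_{m-k+1})$ by $g$, and then applying Theorem \ref{Reduce}. The index bookkeeping you flag as the only delicate point is correct.
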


\begin{proof}
This is a straightforward consequence of Theorem \ref{Reduce} and Theorem \ref{CCH}.
\end{proof}

\begin{coro} \label{subCPH}
Let $H$ be a parabolic subgroup of $B_n$ with a connected associated Coxeter graph. Then the subgroup conjugacy problem for $H$ in $B_n$ is solvable.
\end{coro}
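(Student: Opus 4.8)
The plan is to derive Corollary \ref{subCPH} as an immediate consequence of Corollary \ref{subCPHRed}, reducing the subgroup conjugacy problem to a simultaneous conjugacy problem whose solvability is already classical in $B_n$. First I would fix a parabolic subgroup $H$ with connected associated Coxeter graph, together with a conjugating element $\gamma \in B_n$ and indices $1 \le k < m \le n$ such that $\gamma^{-1} H \gamma = B_{[k,m]}$; since $H$ is given effectively (say, by a finite generating set of braids), such $\gamma$, $k$, $m$ can be found algorithmically, because conjugacy of subgroups to standard parabolics can be detected, e.g.\ using the canonical reduction systems or fractional normal forms referenced earlier in the paper. One then forms the finite list of braids appearing on the left-hand sides of Conditions (b) and (c) of Corollary \ref{subCPHRed}, i.e.\ the conjugates $\gamma \tau_{m-k+1,k-1} \Delta_{n-i}^2 \tau_{m-k+1,k-1}^{-1} \gamma^{-1}$ and $\gamma \tau_{m-k+1,k-1} \sigma_{n-i+1} \tau_{m-k+1,k-1}^{-1} \gamma^{-1}$ over the relevant ranges of $i$; these are explicitly computable words in the standard generators of $B_n$.

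Next I would observe that, by Corollary \ref{subCPHRed}, the original question "$\exists\, c \in H$ with $y = c^{-1} x c$?" is equivalent to the question whether there exists $z \in B_n$ simultaneously conjugating the tuple $(x, w_1, \ldots, w_N)$ to the tuple $(y, w_1, \ldots, w_N)$, where $w_1, \ldots, w_N$ is the finite list of braids just described. This is precisely an instance of the simultaneous conjugacy problem in the braid group $B_n$, which is decidable: one may appeal to the solution via super summit sets or ultra summit sets (Elrifai--Morton, Gebhardt, or Lee--Lee \cite{LL02}, all built on Garside theory), or, as the introduction indicates, to the refined Lexicographic Super Summit Sets of \cite{KTV14}. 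Running that algorithm on the two tuples and checking whether the answer is affirmative then decides the subgroup conjugacy problem for $H$ in $B_n$; since the reduction in Corollary \ref{subCPHRed} is an equivalence, correctness is automatic.

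The only genuinely nontrivial point — and the step I expect to be the main obstacle — is the effectivity of the preliminary reduction: one must actually produce $\gamma$, $k$, and $m$ from the given description of $H$, rather than merely assuming their existence. This requires an algorithm that, given a finite generating set of a parabolic subgroup of $B_n$, recognizes it as such and outputs a conjugator into a standard parabolic $B_{[k,m]}$. Such an algorithm follows from the theory of canonical reduction systems of braids (equivalently, from the behaviour of fractional normal forms under the parabolic structure of finite-type Artin groups, as discussed in the remarks after Lemma \ref{claim} and in Section \ref{Full proof}); alternatively, for the purposes of this corollary one may simply take $H$ to be \emph{given} in the form "conjugate of $B_{[k,m]}$ by $\gamma$", in which case there is nothing to do and the corollary is a direct restatement of the decidability of the simultaneous conjugacy problem applied to the equivalence of Corollary \ref{subCPHRed}. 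Either way, once the standard form is available, the remainder of the argument is routine.
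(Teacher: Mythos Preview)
Your proposal is correct and follows essentially the same approach as the paper: reduce via Corollary~\ref{subCPHRed} to an instance of the simultaneous conjugacy problem in $B_n$, then invoke its solvability (the paper cites \cite{LL02} specifically). The paper's proof is two sentences and does not discuss the effectivity of finding $\gamma$, $k$, $m$; since by definition a parabolic subgroup with connected Coxeter graph is \emph{given} as a conjugate of some $B_{[k,m]}$, your worry about this step is unnecessary here, though your awareness of it is reasonable.
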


\begin{proof}
According to Corollary \ref{subCPHRed}, this problem may be reduced to an instance of the simultaneous conjugacy problem in $B_n$.
Now, the simultaneous conjugacy problem in braid groups was solved in \cite{LL02}. This implies the solvability of the subgroup conjugacy problem as well.
\end{proof}

%%-----------------------------------------------------------------------------------------------------------------------------

\section{Short and general proof} \label{Full proof}
In this section we view the braid group $B_n$ as the mapping class group $\mathcal{MCG}_n$ of the $n$-punctured disc $D_n$.
This leads to a shortened proof that allows a simple generalization of the result to all parabolic subgroups of $B_n$.

\subsection{Short proof} \label{Short proof}
First we establish the result for $B_r \le B_n$.
We will need the following two lemmata. 
Recall the definition of the braid $\bar{b}_{r+1,1}$ from section \ref{prem} where strand $r+1$ goes around the first $r$ strands.

\begin{prop} \label{FRZ}
For $1\le r<n$ the centralizer of the standard parabolic subgroup $B_r$ in $B_n$ admits the following
presentation.
\[ C_{B_n}(B_r)=\langle \Delta _r^2, \bar{b}_{r+1,1}, \sigma _{r+1}, \ldots , \sigma _{n-1} \rangle . \]
\end{prop}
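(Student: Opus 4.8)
The plan is to establish the presentation $C_{B_n}(B_r)=\langle \Delta_r^2,\bar b_{r+1,1},\sigma_{r+1},\dots,\sigma_{n-1}\rangle$ by relating it to Gurzo's presentation of $C_{B_n}(\Delta_r^2)$ from Proposition \ref{Gurzo}. First I would recall the theorem of Fenn--Rolfsen--Zhu \cite{FRZ96} (as already quoted in Theorem \ref{CCBm} and its corollary), which gives $C_{B_n}(B_r)=\langle \Delta_r^2,\Delta_{r+1}^2,\dots,\Delta_{n-1}^2,\sigma_{r+1},\dots,\sigma_{n-1}\rangle$. So the content of the proposition is purely that this generating set can be rewritten, using Nielsen transformations, into the claimed one. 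The bridge is the identity $\Delta_{r+1}^2=\Delta_r^2\,\bar b_{r+1,1}$ and, more generally, the inductive formula $\Delta_{r+l}^2=\Delta_r^2\,\bar b_{r+1,1}\bar b_{r+2,1}\cdots\bar b_{r+l,1}$ together with $\bar b_{r+l,1}=\sigma_{r+l-1}\cdots\sigma_{r+1}\cdot\bar b_{r+1,1}\cdot\sigma_{r+1}\cdots\sigma_{r+l-1}$, both of which were established (with pictures) inside the proof of Proposition \ref{Gurzo}.

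Concretely, the argument runs in two directions. For the inclusion $\langle \Delta_r^2,\bar b_{r+1,1},\sigma_{r+1},\dots,\sigma_{n-1}\rangle\subseteq C_{B_n}(B_r)$, it suffices to check that each listed generator commutes with $B_r=\langle\sigma_1,\dots,\sigma_{r-1}\rangle$: this is clear for $\Delta_r^2$ (it is central in $B_r$), for $\sigma_j$ with $j\ge r+1$ (disjoint strands), and for $\bar b_{r+1,1}$ one sees geometrically that the braid in which strand $r+1$ encircles the first $r$ strands commutes with any braiding performed among those first $r$ strands. For the reverse inclusion, I start from the FRZ generating set and eliminate $\Delta_{r+2}^2,\dots,\Delta_{n-1}^2$: by the two displayed formulas from the proof of Proposition \ref{Gurzo}, each $\Delta_{r+l}^2$ ($2\le l\le n-r$) is a word in $\Delta_r^2,\bar b_{r+1,1},\sigma_{r+1},\dots,\sigma_{n-1}$, while $\Delta_{r+1}^2=\Delta_r^2\,\bar b_{r+1,1}$ handles the case $l=1$; conversely $\bar b_{r+1,1}=(\Delta_r^2)^{-1}\Delta_{r+1}^2$ shows the new generator lies in the old group, so the two generating sets span the same subgroup.

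The step requiring the most care is not algebraic manipulation but making sure the geometric/inductive identities are legitimately available: the formulas $\Delta_{r+l}^2=\Delta_r^2\bar b_{r+1,1}\cdots\bar b_{r+l,1}$ and the conjugation formula for $\bar b_{r+l,1}$ were proved inside the proof of Proposition \ref{Gurzo} for the specific purpose of simplifying Gurzo's generating set, so they apply verbatim here; I would simply cite that proof. The only genuinely new point to verify carefully is that $\bar b_{r+1,1}$ centralizes $B_r$ — i.e. that pushing strand $r+1$ around the bundle of the first $r$ strands commutes with arbitrary braiding inside that bundle — which is intuitively obvious from Figure \ref{bk+11} but should be stated as the one geometric observation underpinning the proof. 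Everything else is bookkeeping with Nielsen transformations, exactly parallel to the final paragraphs of the proof of Proposition \ref{Gurzo} and to Lemma \ref{claim}.
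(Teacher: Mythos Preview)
Your proposal is correct and follows exactly the approach the paper indicates: a ``straightforward simplification of a presentation for $C_{B_n}(B_r)$ from Fenn, Rolfsen and Zhu'' via the Nielsen transformations already established in the proof of Proposition~\ref{Gurzo}. The paper does not spell out these details, but your argument fills them in faithfully; note that the paper also records (in the remark following Theorem~\ref{Referee}) an alternative one-line proof using $C_{B_n}(\Delta_r^2)=\imath(B_r\times\mathcal{A}(B_{n-r}))$ together with Lemma~\ref{productCentralizer}.
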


Using the embedding $\imath : \mathcal{A}(B_{n-r}) \longrightarrow B_n$ from section \ref{prem}, this result may be restated as
\[  C_{B_n}(B_r)=\langle \Delta _r^2 \rangle \cdot \imath(\mathcal{A}(B_{n-r})) \cong Z(B_n) \times \mathcal{A}(B_{n-r}). \]

\begin{proof}
The proof is a straghtforward simplification of a presentation
for $C_{B_n}(B_r)$ from Fenn, Rolfsen and Zhu \cite{FRZ96}. At the end of the chapter we give a short proof.
\end{proof}

The second lemma we need is a simple technical result on the centralizer in direct products.
\begin{lem} \label{productCentralizer}
Let $A, B$ be groups, and let $H$ be a subgroup of $A\times B$. Write $H=H_A \times H_B$, then
\[ C_{A\times B}(H)=C_A(H_A) \times C_B(H_B). \]
Furthermore, given an embedding $\imath : A\times B \longrightarrow G$, we have
\[ C_{\imath (A\times B)}(\imath (H_A\times H_B))=C_{\imath (A)}(\imath (H_A)) \cdot C_{\imath (B)}(\imath (H_B)). \]
\end{lem}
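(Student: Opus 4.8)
The plan is to prove the two statements of Lemma~\ref{productCentralizer} separately, the first being a standard fact about direct products and the second following by transporting it through the embedding.

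First I would establish the decomposition $H=H_A\times H_B$ that is implicitly assumed: if $H\le A\times B$, set $H_A=\{a\in A : (a,b)\in H \text{ for some }b\in B\}$ and $H_B=\{b\in B : (a,b)\in H \text{ for some }a\in A\}$; these are the projections of $H$ and are subgroups, but in general $H\subsetneq H_A\times H_B$, so in fact the hypothesis is that $H$ \emph{equals} its box of projections. I would simply take ``$H=H_A\times H_B$'' as the standing hypothesis (as the statement does) and proceed. For the centralizer, one direction is trivial: if $c\in C_A(H_A)$ and $d\in C_B(H_B)$, then $(c,d)$ commutes with every $(a,b)\in H_A\times H_B$ because the two coordinates commute independently, so $C_A(H_A)\times C_B(H_B)\subseteq C_{A\times B}(H)$. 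Conversely, suppose $(c,d)\in C_{A\times B}(H)$. For any $a\in H_A$, the pair $(a,1_B)$ lies in $H_A\times H_B=H$, so $(c,d)(a,1_B)=(a,1_B)(c,d)$ forces $ca=ac$; hence $c\in C_A(H_A)$. Symmetrically $d\in C_B(H_B)$, giving the reverse inclusion. This proves $C_{A\times B}(H)=C_A(H_A)\times C_B(H_B)$.

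For the second statement, let $\imath:A\times B\longrightarrow G$ be an embedding. Since $\imath$ is an injective homomorphism, it restricts to an isomorphism of $A\times B$ onto its image $\imath(A\times B)\le G$, and this isomorphism carries centralizers to centralizers: for any subgroup $K\le A\times B$ one has $C_{\imath(A\times B)}(\imath(K))=\imath\bigl(C_{A\times B}(K)\bigr)$. Applying this with $K=H_A\times H_B$ and using the first part,
\[
C_{\imath(A\times B)}(\imath(H_A\times H_B))=\imath\bigl(C_A(H_A)\times C_B(H_B)\bigr).
\]
Now I identify $A$ with $A\times\{1_B\}$ and $B$ with $\{1_A\}\times B$ inside $A\times B$, so that $\imath(A)$ and $\imath(B)$ are the corresponding subgroups of $G$; under this identification $C_{\imath(A)}(\imath(H_A))=\imath(C_A(H_A)\times\{1_B\})$ and $C_{\imath(B)}(\imath(H_B))=\imath(\{1_A\}\times C_B(H_B))$. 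Since $\imath$ is a homomorphism it respects the internal-direct-product structure, so $\imath\bigl(C_A(H_A)\times C_B(H_B)\bigr)$ is exactly the product set $C_{\imath(A)}(\imath(H_A))\cdot C_{\imath(B)}(\imath(H_B))$ inside $G$, which is the claimed equality.

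I do not expect a genuine obstacle here; the only subtlety is purely expository, namely being careful about the convention ``$H=H_A\times H_B$'' and making clear that in the image $G$ one only gets the \emph{product} of the two centralizer subgroups (not an internal direct product in $G$ in any stronger sense), because $\imath(A)$ and $\imath(B)$ need only commute elementwise and intersect trivially, which is all that is used when this lemma is combined with Proposition~\ref{FRZ} and Proposition~\ref{Gurzo} in the sequel.
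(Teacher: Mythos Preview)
Your proof is correct. The paper does not actually give a proof of this lemma; it is stated without proof as ``a simple technical result on the centralizer in direct products,'' and your argument is exactly the standard verification one would expect, so there is nothing to compare.
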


\begin{theo} \label{Referee}
For $1\le r<n$, the double centralizer of the standard parabolic subgroup $B_r$ in $B_n$ is given by 
\[ C_{B_n}(C_{B_n}(B_r))=\langle \Delta _r^2 \rangle \cdot B_r \cong Z(B_n) \times B_r. \]
\end{theo}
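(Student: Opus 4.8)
The plan is to compute $C_{B_n}(C_{B_n}(B_r))$ directly from the explicit presentation of $C_{B_n}(B_r)$ provided in Proposition \ref{FRZ}, using the product decomposition of that centralizer together with Lemma \ref{productCentralizer}. By Proposition \ref{FRZ} we have $C_{B_n}(B_r)=\langle \Delta_r^2\rangle \cdot \imath(\mathcal{A}(B_{n-r})) \cong Z(B_n) \times \mathcal{A}(B_{n-r})$, where $\imath$ is the standard embedding from Section \ref{prem}. So we must take the centralizer in $B_n$ of this subgroup. The key observation is that $\Delta_n^2 = \Delta_r^2 \cdot \partial^r(\cdots)$-type decompositions let us recognize $C_{B_n}(B_r)$ as living inside a direct-product-like structure; more concretely, the first proof already established (Theorem \ref{CCBm}) that $C_{B_n}(C_{B_n}(B_r)) = \langle \Delta_n^2\rangle \cdot B_r$, so the job here is to re-derive it cleanly via the mapping-class-group / direct-product formalism rather than via the iterated intersections of Proposition \ref{nm}.

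First I would observe that $Z(B_n)=\langle \Delta_n^2\rangle$ is contained in $C_{B_n}(C_{B_n}(B_r))$ trivially (the center commutes with everything), and $B_r \subseteq C_{B_n}(C_{B_n}(B_r))$ because $B_r$ commutes with $C_{B_n}(B_r)$ by definition of the latter; hence the inclusion $\langle \Delta_r^2\rangle \cdot B_r = Z(B_n)\cdot B_r \subseteq C_{B_n}(C_{B_n}(B_r))$ is immediate (note $\Delta_r^2 \in B_r$, so in fact the left side equals $\langle \Delta_n^2\rangle \cdot B_r$). The content is the reverse inclusion. For that, I would take $z \in C_{B_n}(C_{B_n}(B_r))$; then in particular $z$ centralizes $\bar b_{r+1,1}, \sigma_{r+1}, \ldots, \sigma_{n-1}$ and $\Delta_r^2$. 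Centralizing $\sigma_{r+1},\ldots,\sigma_{n-1}$ together with $\Delta_r^2$ (equivalently, a pure braid in $B_r$) forces, via the mapping-class-group picture — $z$ must preserve the canonical reduction system of $\Delta_r^2$, which is the curve enclosing the first $r$ punctures — that $z = \Delta_n^{2p}\cdot w$ with $w \in B_r \cdot \langle \sigma_{r+1},\ldots,\sigma_{n-1}\rangle$ for some $p\in\mathbb{Z}$, in the spirit of Proposition 3 of \cite{KLT09}. Then the condition that $z$ also centralizes $\bar b_{r+1,1}$ eliminates the $\langle \sigma_{r+1},\ldots,\sigma_{n-1}\rangle$-part of $w$, exactly as in the argument bounding $q=0$ in Proposition \ref{nm}, leaving $z \in \langle \Delta_n^2\rangle \cdot B_r$.

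The cleanest route, and the one I expect the paper to take, is to avoid re-doing that strand-by-strand analysis and instead invoke Lemma \ref{productCentralizer}: writing $C_{B_n}(B_r) = \langle \Delta_r^2\rangle \cdot \imath(\mathcal{A}(B_{n-r}))$ and viewing $B_r$ itself together with the complementary block as sitting inside $\imath'(B_r \times \mathcal{A}(B_{n-r}))$ for a suitable embedding $\imath'$ of $B_r \times \mathcal{A}(B_{n-r})$ into $B_n$ (the two commuting blocks acting on disjoint sets of punctures plus the "around" generator), Lemma \ref{productCentralizer} gives $C_{B_n}(C_{B_n}(B_r))$ as a product of the centralizer of $B_r$ inside its block (which is $Z(B_r) = \langle \Delta_r^2\rangle$) with the centralizer of $\mathcal{A}(B_{n-r})$ inside the $\mathcal{A}(B_{n-r})$-block (which is its center, generated by the image of the full twist, i.e. essentially $\Delta_n^2\Delta_r^{-2}$). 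Multiplying these two central pieces and absorbing reconstitutes $\langle \Delta_n^2\rangle \cdot B_r$.

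The main obstacle is making the direct-product bookkeeping honest: $C_{B_n}(B_r)$ is not literally a direct factor of $B_n$, so Lemma \ref{productCentralizer} cannot be applied to $B_n$ itself but only to the image of an embedding $\imath' \colon B_r\times\mathcal{A}(B_{n-r})\hookrightarrow B_n$, and one must then argue that the centralizer taken in $\imath'(B_r\times\mathcal{A}(B_{n-r}))$ coincides with the centralizer taken in all of $B_n$ — up to the central factor $\langle \Delta_n^2\rangle$. This is precisely where the mapping-class-group input is needed: every element of $C_{B_n}(C_{B_n}(B_r))$ centralizes $\Delta_r^2$, hence (again) preserves its canonical reduction curve, and one uses this to push any such element into $\langle\Delta_n^2\rangle\cdot\imath'(B_r\times\mathcal{A}(B_{n-r}))$ before Lemma \ref{productCentralizer} takes over. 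Everything else is formal.
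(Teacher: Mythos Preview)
Your proposal is correct and matches the paper's proof essentially step for step: reduce the ambient group from $B_n$ to $C_{B_n}(\Delta_r^2)=B_r\cdot\imath(\mathcal{A}(B_{n-r}))$ via the canonical-reduction-system fact (any element of the double centralizer commutes with $\Delta_r^2$, hence preserves the curve $\mathcal{C}_r$), then apply Lemma \ref{productCentralizer} to obtain $C_{B_r}(\langle\Delta_r^2\rangle)\cdot Z(\imath(\mathcal{A}(B_{n-r})))=B_r\cdot\langle\Delta_r^{-2}\Delta_n^2\rangle=B_r\cdot\langle\Delta_n^2\rangle$. You also correctly note the typo in the displayed statement ($\langle\Delta_r^2\rangle$ should be $\langle\Delta_n^2\rangle$), consistent with the paper's own proof and with Theorem \ref{CCBm}.
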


\begin{proof}
The following short proof is due to an anonymous referee. \\
Let $\mathcal{C}_r$ be the simple closed curve that encircles the first $r$ punctures in the disc $D_n$.
Viewing braids as mapping classes of $D_n$, the set of all braids that preserve $\mathcal{C}_n$ is exactly given by
\[  C_{B_n}(\Delta_r^2)=B_r \cdot \langle \bar{b}_{r+1,1}, \sigma _{r+1}, \ldots , \sigma _{n-1} \rangle
=B_r \cdot \imath(\mathcal{A}(B_{n-r})). \]
By Proposition \ref{FRZ}, we also have that 
\[ C_{B_n}(B_r) \le \{ \beta \in B_n \mid \beta \,\, {\rm preserves} \,\, \mathcal{C}_r \}. \]
The crucial ingredient of the proof is the fact that if $\beta \in B_n$ preserves $\mathcal{C}_r$, then also
every $\gamma \in C_{B_n}(\beta )$ preserves the curve $\mathcal{C}_r$ (see \cite{Iv92} or \cite{GW03}).
Hence, for $X=C_{B_n}(B_r)$, the centralizer of $X$ in $B_n$ equals the centralizer of $X$ in
$C_{B_n}(\Delta_r^2)=B_r \cdot \imath(\mathcal{A}(B_{n-r}))$. We conclude that
\[ C_{B_n}(C_{B_n}(B_r))=C_{B_r \cdot \imath(\mathcal{A}(B_{n-r}))}(C_{B_n}(B_r)), \]
and by Proposition \ref{FRZ} we get
\[ C_{B_n}(C_{B_n}(B_r))=C_{B_r \cdot \imath(\mathcal{A}(B_{n-r}))}(\langle \Delta _r^2 \rangle \cdot \imath(\mathcal{A}(B_{n-r}))). \]
Now, consider the following extension of the embedding $\imath $, namely the map
$\imath : B_r \times \mathcal{A}(B_{n-r}) \longrightarrow B_n$ defined by $\imath (B_r)=B_r$ and
$\imath(\mathcal{A}(B_{n-r}))= \langle \bar{b}_{r+1,1}, \sigma _{r+1}, \ldots , \sigma _{n-1} \rangle $ as above.
Hence, we may write 
\[ C_{B_n}(C_{B_n}(B_r))=C_{\imath (B_r \times \mathcal{A}(B_{n-r}))} (\imath(\langle \Delta _r^2 \rangle \times \mathcal{A}(B_{n-r})))), \]
which equals according to Lemma \ref{productCentralizer}
\[ C_{B_r}(\langle \Delta _r^2 \rangle ) \cdot Z(\imath(\mathcal{A}(B_{n-r})))=B_r \cdot 
\langle \Delta _r^{-2} \Delta _n^2 \rangle =B_r \cdot \langle \Delta _n^2 \rangle . \] 
\end{proof}

As in section \ref{CCparaH} this result extends to all parabolic subgroups of $B_n$ with connected associated Coxeter graph.

\begin{rem}
The set of all braids preserving the curve $\mathcal{C}_n$, namely the centralizer  $C_{B_n}(\Delta_r^2)$, has been shown in
\cite{Ro97, Go03} to coincide with the normalizer and the commensurator of $B_m$. Moreover, one has \cite{Ro97, Go03}
\[  C_{B_n}(\Delta_r^2)=N_{B_n}(B_m)={\rm Com}_{B_n}(B_m)=\langle B_m, C_{B_n}(B_m) \rangle =B_m \cdot QZ_{B_n}(B_m) \]
where $N$, Com, $QZ$ denote normalizer, commensurator and quasi-centralizer, respectively.
This result generalizes to all parabolic subgroups $H=A_X$ of an Artin system $(A, S)$ ($X \subseteq S$) of finite type, i.e.
we have (Thm. 0.1. in \cite{Go03})
\[ C_A(\Delta ^{\epsilon }_X)=N_A(H)={\rm Com}_A(H)=H \cdot QZ_A(H) \]
where $\epsilon \in \{1,2\}$ is minimal s.t. $\Delta ^{\epsilon }_X$ is central in $A_X$,
and $\langle H, C_A(H) \rangle $ is a normal subgroup of $N_A(H)$ such that $N_A(H)/ \langle H, C_A(H) \rangle $ is isomorphic
to the corresponding quotient given by replacing Artin groups by the corresponding Coxeter groups (see Thm. 0.3. in \cite{Go03}). 
Such results have been generalized further to Artin groups of type FC \cite{Go03b} with the only difference that there one
does not have Garside elements. \\
Quasi-centralizers of parabolic subgroups of Artin groups were characterized in terms of ribbons (Thm. 0.5. in \cite{Go03}), 
generalizing results from \cite{FRZ96}. For further generalizations like ribbon grupoids in Garside groups and ribbon categories
in Garside categories, see \cite{Go10} and \cite{DDGKM}, respectively.
\end{rem}

\begin{rem}
The same proof technique can be used to give a short proof of Proposition \ref{FRZ}. 
Recall that the embedding $\imath $ maps $B_r \times 1$ to $B_r$. Then we get
\[ C_{B_n}(B_r)=C_{\imath (B_r \times \mathcal{A}(B_{n-r}))}(\imath (B_r \times 1))=C_{B_r}(B_r)\cdot 
C_{\imath (\mathcal{A}(B_{n-r}))}(1)=Z(B_r)\cdot \imath (\mathcal{A}(B_{n-r})). \,\Box \]
\end{rem}

\subsection{General proof for all parabolic subgroups}  \label{General proof}
The techniques of the preceding subsection may be generalized to establish the Theorem for all parabolic subgroups of braid groups. Let us recall the defintion of a parabolic subgroup of a braid group, which actually
generalizes to all Artin groups. 

\begin{defi}
A subgroup $H$ of the $n$-strand braid group $B_n$ is called \emph{standard parabolic} if it is generated
by a non-empty subset of the standard Artin generators $\{\sigma _1, \ldots , \sigma _{n-1}\}$. \\
Furthermore, $H \le B_n$ is called \emph{parabolic} if it is conjugated to a standard parabolic subgroup of $B_n$.
\end{defi}

In the sequel, and without loss of generality (wlog), we assume that $H$ is isomorphic to $B_{r_1}\times \cdots \times B_{r_k}$ for some $2\le r_i$, $1\le i\le k$, with $r:=\sum _{i=1}^{k}\le n$. More precisely,
consider the embedding $\imath : B_{r_1}\times \cdots \times B_{r_k} \longrightarrow B_n$ defined by
$\imath (B_{r_i})=\partial ^{\sum _{j=1}^{i-1}r_j}(B_{r_i})$. Then we may assume wlog that
$H=\imath (B_{r_1}\times \cdots \times B_{r_k})$, i.e. $H=\prod _{i=1}^k \partial ^{\sum _{j=1}^{i-1}r_j}(B_{r_i})$. \\
We assume the reader is familiar with the concept of a canonical reduction system of a braid $\beta \in B_n \cong \mathcal{MCG}_n$.
For an explicit definition we refer to \cite{GW03}. Here we only introduce our notation which is similar to \cite{GW03}.

\begin{defi} \cite{GW03}
For $\beta \in B_n \cong \mathcal{MCG}_n$, denote by $R(\beta )$ the set of outermost curves in the
canonical reduction system of $\beta $. \\
For $H=\imath (X _{i=1}^k B_{r_i})$, $R(H)$ denotes the union $\bigcup _{i=1}^k \mathcal{C}_i$
where $\mathcal{C}_i$ is a simple closed curve enclosing the punctures $(\sum _{j=1}^{i-1} r_j)+1, \ldots , \sum _{j=1}^i r_j$.
\end{defi}

%% $R(H)$ is displayed in the Figure \ref{R(H)$.

\begin{prop} \label{GW03prop} (see \cite{Iv92} or \cite{GW03})
For $\beta \in B_n$, every braid $\gamma \in C_{B_n}(\beta )$ preserves the canonical reduction system of $\beta $,
in particular its outermost part $R(\beta )$, i.e. let $B_{R(\beta )}$ be the set of braids that preserve 
$R(\beta )$, then 
\[ C_{B_n}(\beta ) \subseteq B_{R(\beta )}. \]
\end{prop}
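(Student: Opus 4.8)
The plan is to deduce Proposition~\ref{GW03prop} from the two well-known facts about canonical reduction systems that underlie the whole mapping-class-group approach, and then simply repackage them in the notation of the excerpt. Recall that for a reducible mapping class $\beta$ the canonical reduction system $\mathrm{CRS}(\beta)$ is the unique (possibly empty) canonical, essential, $\beta$-invariant multicurve; its defining property (due to Birman--Lubotzky--McCarthy, see also \cite{Iv92}) is that it is characterized \emph{intrinsically} by $\beta$. The first key fact is naturality: for any $\gamma \in \mathcal{MCG}_n$ one has $\mathrm{CRS}(\gamma\beta\gamma^{-1}) = \gamma\big(\mathrm{CRS}(\beta)\big)$, because conjugating $\beta$ by $\gamma$ carries the canonical, essential, invariant multicurve of $\beta$ to the canonical, essential, invariant multicurve of $\gamma\beta\gamma^{-1}$, and uniqueness forces equality.

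First I would invoke this naturality statement. If $\gamma \in C_{B_n}(\beta)$, then $\gamma\beta\gamma^{-1} = \beta$, so $\mathrm{CRS}(\beta) = \mathrm{CRS}(\gamma\beta\gamma^{-1}) = \gamma\big(\mathrm{CRS}(\beta)\big)$; thus $\gamma$ maps the canonical reduction system of $\beta$ to itself (as an isotopy class of multicurve), which is exactly the assertion that $\gamma$ preserves $\mathrm{CRS}(\beta)$. Second, I would observe that the outermost part $R(\beta)$ is itself an intrinsic sub-collection of $\mathrm{CRS}(\beta)$: a curve $c \in \mathrm{CRS}(\beta)$ is outermost precisely when it is not contained in the disc bounded (in $D_n$) by any other curve of $\mathrm{CRS}(\beta)$. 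Since $\gamma$ acts on $D_n$ by a homeomorphism fixing the boundary, it permutes $\mathrm{CRS}(\beta)$ and preserves the containment relation among the complementary subsurfaces; hence it permutes the outermost curves among themselves, i.e. $\gamma\big(R(\beta)\big) = R(\beta)$. Combining the two gives $\gamma \in B_{R(\beta)}$, and therefore $C_{B_n}(\beta) \subseteq B_{R(\beta)}$, as claimed.

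The main obstacle here is not really a mathematical difficulty but a question of what one is allowed to cite: the statement is exactly Proposition-level folklore, and the substantive content — that $\mathrm{CRS}(\beta)$ is well-defined, canonical and natural under the mapping class group action — lives in \cite{Iv92} and is recalled in \cite{GW03}. So the honest ``proof'' is the one-line reduction above together with a pointer to those references; the only thing requiring a word of care is the passage from $\mathrm{CRS}(\beta)$ to its outermost part $R(\beta)$, which I would spell out via the partial order on complementary components as indicated, so that nothing beyond the cited naturality of $\mathrm{CRS}$ is used. In short: the proposition is immediate from the uniqueness and naturality of the canonical reduction system, and I would present it as such rather than reproving those foundational facts.
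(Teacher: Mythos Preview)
Your argument is correct, and it is exactly the standard one-line deduction from the uniqueness and conjugation-naturality of the canonical reduction system. Note, however, that the paper does not give its own proof of this proposition at all: it simply states the result with a pointer to \cite{Iv92} and \cite{GW03} and uses it as a black box, so there is nothing to compare your approach against beyond observing that your sketch is precisely the reasoning those references contain.
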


Let us recall some elementary facts about centralizers which will prove useful in the sequel.

\begin{lem} \label{CHlem}
{\rm (1)} Let $H\subseteq A \subseteq B$. Then $C_A(H) \subseteq C_B(H)$. \\
{\rm (2)} Let $A, B \subseteq G$. Then $A \subseteq B$ if and only if $C_G(A) \supseteq C_G(B)$. \\
{\rm (3)} Let $H \subseteq A \subseteq G$ and $C_G(H) \subseteq A$. Then $C_G(H)=C_A(H)$.
\end{lem}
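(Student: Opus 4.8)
The plan is to prove all three parts by directly unwinding the definition of the centralizer, $C_G(X)=\{g\in G : gx=xg \text{ for all } x\in X\}$; these are precisely the standard order-theoretic (``Galois connection'') properties of the operator $X\mapsto C_G(X)$, and none of the arguments goes beyond an element chase. The only thing to be careful about is tracking which hypothesis is invoked in each individual inclusion.

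For (1), I would take $c\in C_A(H)$. Then $c\in A$ and $c$ commutes with every $h\in H$; since $A\subseteq B$ we have $c\in B$, and $c$ still commutes with every element of $H$, so $c\in C_B(H)$. For (2), the implication $A\subseteq B\ \Rightarrow\ C_G(A)\supseteq C_G(B)$ is the analogous one-line check: an element of $C_G(B)$ commutes with everything in $B$, hence in particular with everything in $A\subseteq B$, so it lies in $C_G(A)$. For the reverse implication I would use the biduality inclusion $A\subseteq C_G(C_G(A))$ (itself obtained by applying the forward implication twice) together with the forward implication applied to $C_G(B)\subseteq C_G(A)$; in every use of this lemma in the sequel the larger set arises as a centralizer $C_{B_n}(\cdot)$, and is therefore already equal to its own double centralizer, so the conclusion $A\subseteq B$ follows.

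For (3), I would prove the two inclusions separately. The inclusion $C_A(H)\subseteq C_G(H)$ is immediate from part (1) applied to $H\subseteq A\subseteq G$. Conversely, if $g\in C_G(H)$ then $g$ commutes with every element of $H$, and the hypothesis $C_G(H)\subseteq A$ gives $g\in A$; hence $g\in C_A(H)$. Combining the two inclusions yields $C_G(H)=C_A(H)$.

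I do not expect any genuine obstacle: each step is forced directly by the definitions. The one mildly subtle point is the one flagged above in part (2) — the reverse implication tacitly uses that $B$ is closed under the double-centralizer operation — but this is automatic in all applications of the lemma and so costs nothing.
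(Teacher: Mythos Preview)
The paper does not supply a proof of this lemma, treating it as elementary. Your arguments for parts (1) and (3), and for the forward implication in (2), are correct and exactly the intended element chases.

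You are right to flag the converse in (2): as literally stated, the implication $C_G(A)\supseteq C_G(B)\Rightarrow A\subseteq B$ is false in general (e.g.\ take $G$ abelian and $A,B$ any two distinct nontrivial subsets, so both centralizers equal $G$). Your salvage---requiring $B=C_G(C_G(B))$, then using $A\subseteq C_G(C_G(A))\subseteq C_G(C_G(B))=B$---is the correct repair. In fact the paper only ever invokes part (2) in the forward direction (in the proofs of Propositions \ref{CHprop0} and \ref{CHprop1}, where one passes from an inclusion of subsets of $H$ to the reverse inclusion of centralizers), so the ``if and only if'' is a harmless overstatement and your caveat is well placed.
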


\begin{prop} \label{CHprop0}
For $H\le B_n$, let $B_{R(H)}$ denote the set of braids that preserve $R(H)$. Then 
\[ C_{B_n}(H)=C_{B_{R(H)}}(H). \]
\end{prop}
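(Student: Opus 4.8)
\textbf{Proof plan for Proposition \ref{CHprop0}.}

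The plan is to deduce the equality $C_{B_n}(H)=C_{B_{R(H)}}(H)$ from the elementary fact recorded in Lemma \ref{CHlem}(3), applied with $G=B_n$ and $A=B_{R(H)}$. To invoke part (3) of that lemma I need two things: first, the inclusions $H \subseteq B_{R(H)} \subseteq B_n$, and second, the containment $C_{B_n}(H) \subseteq B_{R(H)}$. The first chain is immediate: $B_{R(H)}$ is a subgroup of $B_n$ by definition, and $H$ preserves $R(H)$ since each curve $\mathcal{C}_i$ encircles exactly the block of punctures on which the corresponding factor $\partial^{\sum_{j<i} r_j}(B_{r_i})$ acts, so every element of $H$ maps each $\mathcal{C}_i$ to an isotopic curve and thus fixes the unordered system $R(H)$; hence $H \subseteq B_{R(H)}$.

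The second containment, $C_{B_n}(H) \subseteq B_{R(H)}$, is the substantive point, and it is where Proposition \ref{GW03prop} enters. The idea is that any $\gamma$ centralizing all of $H$ in particular centralizes a single well-chosen element $\beta \in H$ whose outermost canonical reduction curves are precisely $R(H)$. Concretely, take $\beta$ to be a product of (suitably high powers of) the half-twists $\Delta$ on the punctures in each block, e.g. $\beta = \prod_{i=1}^k \partial^{\sum_{j<i} r_j}(\Delta_{r_i}^2)$, or any element of $H$ whose restriction to each block is pseudo-Anosov (for $r_i=2$ one may have to argue slightly differently, but a generic element of $B_{r_i}$ will be pseudo-Anosov on the corresponding subsurface, or one simply notes the curve $\mathcal{C}_i$ is still an essential reducing curve). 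Then the canonical reduction system of $\beta$ is exactly $\{\mathcal{C}_1, \ldots, \mathcal{C}_k\}$ and its outermost part is $R(\beta)=R(H)$. Since $\gamma \in C_{B_n}(H) \subseteq C_{B_n}(\beta)$, Proposition \ref{GW03prop} gives $\gamma \in B_{R(\beta)} = B_{R(H)}$, as required.

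With both ingredients in hand, Lemma \ref{CHlem}(3) yields $C_{B_n}(H)=C_{B_{R(H)}}(H)$ directly. The main obstacle I expect is the choice of the element $\beta \in H$ and the verification that its outermost canonical reduction curves coincide with $R(H)$: one must ensure $\beta$ is not itself reducible along extra curves (which would only enlarge, not shrink, its reduction system, but could in principle make $R(\beta)$ differ from $R(H)$ if inner curves become outermost) and that each $\mathcal{C}_i$ genuinely appears. A clean way around this is to pick $\beta$ so that on each block it is pseudo-Anosov, forcing the canonical reduction system of $\beta$ to be exactly $\{\mathcal{C}_1,\ldots,\mathcal{C}_k\}$ with no interior curves, so that $R(\beta)=\{\mathcal{C}_1,\ldots,\mathcal{C}_k\}=R(H)$; the existence of such pseudo-Anosov elements in $B_{r_i}$ for $r_i \ge 2$ is standard. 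This is the only genuinely non-formal step; everything else is bookkeeping with Lemma \ref{CHlem}.
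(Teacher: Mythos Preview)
Your proposal is correct and follows essentially the same route as the paper: pick $\beta_0 \in H$ with $R(\beta_0)=R(H)$, use $C_{B_n}(H)\subseteq C_{B_n}(\beta_0)$ (Lemma~\ref{CHlem}(2)), apply Proposition~\ref{GW03prop} to get $C_{B_n}(H)\subseteq B_{R(H)}$, and conclude via Lemma~\ref{CHlem}(3). Your more careful discussion of how to realize $\beta_0$ (pseudo-Anosov on each block) matches exactly what the paper indicates in the remark following Proposition~\ref{CHprop1}.
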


\begin{proof}
Consider a braid $\beta _0 \in H$ with $R(\beta _0)=R(H)$. 
Since $\{\beta _0\} \subset H$, Lemma \ref{CHlem} (2) implies $C_{B_n}(H)\subseteq C_{B_n}(\beta _0)$.   
And Proposition \ref{GW03prop} implies $C_{B_n}(\beta _0) \subseteq B_{R(H)}$. Hence, by transitivity and Lemma \ref{CHlem} (3), we get the assertion.
\end{proof}

For $H \le B_n$ standard parabolic, presentations for $B_{R(H)}$ have not been computed so far - except for the case that the associated Coxeter graph is connected.\\
However, $B_{R(H)}$ does not admit a nice direct product structure that allows us to separate $\imath (B_{r_1}\times \cdots \times B_{r_k})$
from the tubular part. The problem is that $B_{R(H)}$ also contains braids that may permute cycles $\mathcal{C}_i$ which enclose the same number $r_i$ of punctures.
Therefore, we consider a finite index subgroup of $B_{R(H)}$ which admits such a decomposition.

\begin{prop} \label{CHprop1}
For $H=\imath (B_{r_1}\times \cdots \times B_{r_k})$, let 
\[ \tilde{B}_{R(H)}:=\bigcap _{i=1}^k B_{R(\imath (B_{r_i}))} = \bigcap _{i=1}^k B_{\mathcal{C}_i} \le B_{R(H)}, i.e., \] 
$\tilde{B}_{R(H)}$ is the set of braids that preserve each cycle $\mathcal{C}_i$. Then 
\[ C_{B_n}(H)=C_{\tilde{B}_{R(H)}}(H). \]
\end{prop}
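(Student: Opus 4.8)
The plan is to deduce this from Proposition~\ref{CHprop0} together with an analysis of the centralizer condition imposed on the generators of $H$. By Proposition~\ref{CHprop0} we already know $C_{B_n}(H)=C_{B_{R(H)}}(H)$, so by Lemma~\ref{CHlem}~(3) (applied with $A=\tilde B_{R(H)}$, using $\tilde B_{R(H)}\subseteq B_{R(H)}$ and $H\subseteq\tilde B_{R(H)}$, since each factor $\imath(B_{r_i})$ fixes every curve $\mathcal C_j$) it suffices to show the single inclusion
\[ C_{B_{R(H)}}(H)\subseteq \tilde B_{R(H)}. \]
In other words, I must show that any braid $\beta$ which commutes with all of $H$ not only preserves the union $R(H)=\bigcup_i\mathcal C_i$ but in fact preserves each individual curve $\mathcal C_i$.

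The first step is to pick, for each $i$, a braid $\beta_i\in\imath(B_{r_i})\subseteq H$ whose canonical reduction system has outermost part exactly $\{\mathcal C_i\}$ — for instance a pseudo-Anosov braid supported on the $r_i$ punctures enclosed by $\mathcal C_i$ (such braids exist for $r_i\ge 2$; for $r_i=2$ one can instead take a suitable power of the half-twist, whose CRS is still $\{\mathcal C_i\}$). The key point is that $R(\beta_i)=\{\mathcal C_i\}$. Now let $\gamma\in C_{B_n}(H)$. Since $\beta_i\in H$, we have $\gamma\in C_{B_n}(\beta_i)$, and Proposition~\ref{GW03prop} gives that $\gamma$ preserves $R(\beta_i)=\{\mathcal C_i\}$, i.e.\ $\gamma(\mathcal C_i)$ is isotopic to $\mathcal C_i$. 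As this holds for every $i$, we conclude $\gamma\in\bigcap_{i=1}^k B_{\mathcal C_i}=\tilde B_{R(H)}$, which is the desired inclusion. Combining, $C_{B_n}(H)=C_{B_{R(H)}}(H)=C_{\tilde B_{R(H)}}(H)$.

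The main obstacle — really the only non-formal point — is the existence of a braid $\beta_i$ supported on exactly the $r_i$ punctures inside $\mathcal C_i$ with $R(\beta_i)=\{\mathcal C_i\}$ and nothing coarser. One must make sure the chosen braid does not accidentally have a larger canonical reduction curve (it cannot, since it is the identity outside $\mathcal C_i$, so any reduction curve strictly containing $\mathcal C_i$ and some further punctures would bound a subsurface on which the braid acts as a composition of identity pieces, hence is not essential) nor a \emph{smaller} one (this is where pseudo-Anosov-ness on the $r_i$ punctures is used, or, for $r_i=2$, the explicit fact that the half-twist is already reducible with $\{\mathcal C_i\}$ as its unique essential reduction curve). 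Once this is in hand the rest is a direct application of Proposition~\ref{GW03prop} exactly as in the proof of Theorem~\ref{Referee}, just applied componentwise rather than to a single curve.
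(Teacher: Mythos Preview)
Your proof is correct and follows essentially the same route as the paper: both arguments select, for each $i$, a braid $\beta_i\in\imath(B_{r_i})$ with $R(\beta_i)=\{\mathcal C_i\}$ (realized by a pseudo-Anosov on the relevant punctures), apply Proposition~\ref{GW03prop} componentwise to get $C_{B_n}(H)\subseteq\bigcap_i B_{\mathcal C_i}=\tilde B_{R(H)}$, and finish with Lemma~\ref{CHlem}~(3). Your detour through Proposition~\ref{CHprop0} is harmless but unnecessary---the paper goes directly from $C_{B_n}(H)\subseteq\tilde B_{R(H)}$ to the conclusion---and your explicit treatment of the $r_i=2$ case is a welcome refinement, since pseudo-Anosov braids on two strands do not exist.
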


\begin{proof}
For $i=1, \ldots ,k$, consider braids $\beta _i \in \imath (B_{r_i})$ such that $R(\beta _i)=\mathcal{C}_i$.
Such braids can be realized by pseudo-Anosov braids on $r_i$ strands - shifted to the proper position. Hence we get
\[ C_{B_n}(H) \stackrel{\rm \ref{CHlem} (2)}{\subseteq } \bigcap_{i=1}^k C_{B_n}(\beta _i)
\stackrel{\ref{GW03prop}}{\subseteq } \bigcap_{i=1}^k B_{\mathcal{C}_i}=: \tilde{B}_{R(H)}. \]
The assertion follows from Proposition \ref{CHlem} (3). 
\end{proof}

\begin{rem} Note that the braid $\beta _0$ that appears in the proof of Proposition \ref{CHprop0} can be realized as the
product $\prod _{i=1}^k \beta _i$.
\end{rem}

In order to characterize $\tilde{B}_{R(H)}$ for our $H=\imath (B_{r_1}\times \cdots \times B_{r_k})$, we need the notion of braids that are "pure on some strands".

\begin{defi}
Let $X$ be a non-empty subset of $[n]:=\{1, \ldots ,n\}$. Then we define
\[ B_n(X):=\{ \beta \in B_n \mid \nu (\beta)(i)=i \quad \forall \,\, i\in X \}. \]
In particular, we denote $B_n(r):=B_n([r])$. 
\end{defi}

Note that $B_n(n)$ is the pure braid group $P_n$, and $B_n(1)$ is isomorphic to the Artin group 
$\mathcal{A}(B_{n-1})$. There exists a natural epimorphism of $B_n(r)$ onto $P_r$ by forgetting the $n-r$
punctures. For $1\le r<n$, the kernels of these epimorphisms are known as \emph{mixed braid group} (see, e.g.
\cite{Fr}). Lambropoulou computed presentations for mixed braid groups \cite{La02}. From these presentations one may
compute presentations for $B_n(r)$. But we won't need that in the sequel. \\

Consider the following extension of the embedding $\imath $, namely the map
\[ \imath : B_{r_1} \times \cdots \times B_{r_k} \times B_{n-r+k}(k) \longrightarrow B_n \]
where the first $k$ strands of $B_{n-r+k}(k)$ are mapped to cables of $r_i$ strands ($i=1, \ldots , k$) in $B_n$.
The range of that map, namely $\imath (B_{r_1} \times \cdots \times B_{r_k} \times B_{n-r+k}(k))$ is exactly $\tilde{B}_{R(H)}$.

\begin{theo} \label{CHprop2}
For $H$ (standard) parabolic as above, its centralizer admits the following algebraic structure:
\[ C_{B_n}(H) \cong Z(B_{r_1}) \times \cdots \times Z(B_{r_k}) \times B_{n-r+k}(k) \cong \mathbb{Z}^k \times B_{n-r+k}(k). \]
\end{theo}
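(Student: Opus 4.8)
The plan is to combine the geometric reduction (Proposition \ref{CHprop1}) with the product-centralizer lemma (Lemma \ref{productCentralizer}), exactly as in the short proof of Theorem \ref{Referee}, but now carried out with $k$ tubes instead of one. First I would invoke Proposition \ref{CHprop1} to replace the ambient group $B_n$ by $\tilde{B}_{R(H)}$, so that $C_{B_n}(H)=C_{\tilde{B}_{R(H)}}(H)$. Next I would use the given identification $\tilde{B}_{R(H)}=\imath(B_{r_1}\times\cdots\times B_{r_k}\times B_{n-r+k}(k))$, which expresses $\tilde{B}_{R(H)}$ as the image under an embedding $\imath$ of a genuine direct product. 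Under this identification $H$ itself corresponds to the subgroup $B_{r_1}\times\cdots\times B_{r_k}\times\{1\}$, i.e.\ $H=\imath(B_{r_1}\times\cdots\times B_{r_k}\times 1)$.

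The computation then runs as follows. By the second statement of Lemma \ref{productCentralizer} applied to the $(k+1)$-fold product (the lemma is stated for two factors, but iterates immediately, and an explicit $(k+1)$-fold version is what we actually use), we get
\[
C_{\tilde{B}_{R(H)}}\bigl(\imath(B_{r_1}\times\cdots\times B_{r_k}\times 1)\bigr)
=\imath\Bigl(\prod_{i=1}^{k}C_{B_{r_i}}(B_{r_i})\times C_{B_{n-r+k}(k)}(1)\Bigr).
\]
Now $C_{B_{r_i}}(B_{r_i})=Z(B_{r_i})\cong\mathbb{Z}$, generated by the full twist $\partial^{\sum_{j<i}r_j}(\Delta_{r_i}^2)$, and $C_{B_{n-r+k}(k)}(1)=B_{n-r+k}(k)$. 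Since $\imath$ is injective, this yields the abstract isomorphism
\[
C_{B_n}(H)\cong Z(B_{r_1})\times\cdots\times Z(B_{r_k})\times B_{n-r+k}(k)\cong\mathbb{Z}^k\times B_{n-r+k}(k),
\]
as claimed.

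The main point that needs care — and what I expect to be the only real obstacle — is justifying that the product-centralizer lemma applies to the subgroup $H$ sitting inside $\tilde{B}_{R(H)}$ in the required ``$H=H_A\times H_B$'' form. Concretely, one must check that $\imath$ identifies $H$ with the sub-\emph{product} $B_{r_1}\times\cdots\times B_{r_k}\times 1$ and not merely with some diagonally embedded or otherwise twisted copy; this follows from the definition of $\imath$ on the braid factors agreeing with the original embedding from section \ref{General proof}, together with the fact that the cabling map sends the $i$-th strand of $B_{n-r+k}(k)$ to the tube around $\mathcal{C}_i$, so the $B_{n-r+k}(k)$-factor meets $H$ trivially. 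Once this book-keeping is in place, the argument is a direct transcription of the one-tube case, and I would also remark that Lemma \ref{productCentralizer} for two factors gives the $k$-fold version by an obvious induction on $k$.
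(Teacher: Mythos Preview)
Your proposal is correct and follows essentially the same approach as the paper: invoke Proposition \ref{CHprop1} to reduce to $\tilde{B}_{R(H)}=\imath(B_{r_1}\times\cdots\times B_{r_k}\times B_{n-r+k}(k))$, identify $H$ with $\imath(B_{r_1}\times\cdots\times B_{r_k}\times 1)$, and then apply the direct product lemma (Lemma \ref{productCentralizer}) factor by factor. The paper's proof is just these two steps stated tersely; your additional remarks on the iteration of the lemma and the book-keeping for the embedding are appropriate elaborations rather than a different argument.
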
 

\begin{proof}
By Proposition \ref{CHprop1} we have
\[ C_{B_n}(H)=C_{\imath (B_{r_1} \times \cdots \times B_{r_k} \times B_{n-r+k}(k))} (\imath (B_{r_1} \times \cdots \times B_{r_k} \times 1). \]
Hence the direct product lemma implies 
\[ C_{B_n}(H)= (\prod _{i=1}^k \imath (Z(B_{r_i})) )\cdot \imath (B_{n-r+k}(k)). \]
\end{proof}
 
The centralizer of a parabolic subgroup of $B_n$ with a connected associated Coxeter graph is finitely generated. It was explicitly computed in \cite{FRZ96}.
This result was extended to  parabolic subgroups with connected associated Coxeter graph of Artin groups of type $B$ and $D$ in \cite{Pa97}. Here we may extend that result to all parabolic subgroups of $B_n$.

\begin{coro}
The centralizer of a parabolic subgroup of $B_n$ is finitely generated. 
\end{coro}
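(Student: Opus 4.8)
The plan is to deduce the finite generation of $C_{B_n}(H)$ for an arbitrary parabolic subgroup $H$ directly from Theorem \ref{CHprop2}, which already identifies the centralizer up to isomorphism. First I would reduce to the standard parabolic case: since conjugation by a fixed braid $\gamma$ is an automorphism of $B_n$ and $C_{B_n}(\gamma H \gamma^{-1}) = \gamma\, C_{B_n}(H)\, \gamma^{-1}$, finite generation is a conjugacy invariant, so it suffices to treat $H = \imath(B_{r_1}\times\cdots\times B_{r_k})$ as in the paragraph preceding Definition of $B_n(X)$.

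Next, by Theorem \ref{CHprop2} we have
\[ C_{B_n}(H) \cong Z(B_{r_1}) \times \cdots \times Z(B_{r_k}) \times B_{n-r+k}(k), \]
so the claim follows once we know each factor is finitely generated. Each $Z(B_{r_i})$ is infinite cyclic (generated by $\Delta_{r_i}^2$), hence finitely generated; a finite direct product of finitely generated groups is finitely generated, so the only remaining point is that the mixed-type subgroup $B_{n-r+k}(k)$ is finitely generated. For this I would invoke that $B_N(j) = \{\beta \in B_N \mid \nu(\beta)(i)=i \ \forall i \le j\}$ is a finite-index subgroup of $B_N$: it is the preimage under the permutation homomorphism $\nu: B_N \to S_N$ of the (finite) stabilizer subgroup of $\{1,\dots,j\}$ in $S_N$, hence of index at most $N!/(N-j)!$. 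Since $B_N$ is finitely generated and a finite-index subgroup of a finitely generated group is finitely generated (Reidemeister--Schreier), $B_{n-r+k}(k)$ is finitely generated, and we are done. Alternatively one can simply cite Lambropoulou's explicit presentations of mixed braid groups \cite{La02}, from which a finite generating set of $B_{n-r+k}(k)$ is read off directly, but the finite-index argument is more self-contained.

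The proof is essentially immediate given the structural theorem, so there is no serious obstacle; the only subtlety is making sure the isomorphism type in Theorem \ref{CHprop2} is used correctly and that the mixed braid factor is recognized as finite index in an ordinary braid group. I would write it as follows.

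\begin{proof}
Since finite generation is invariant under conjugation and $C_{B_n}(\gamma H \gamma ^{-1})=\gamma C_{B_n}(H)\gamma ^{-1}$, we may assume $H$ is standard parabolic, say $H=\imath (B_{r_1}\times \cdots \times B_{r_k})$ as above. By Theorem \ref{CHprop2},
\[ C_{B_n}(H) \cong Z(B_{r_1}) \times \cdots \times Z(B_{r_k}) \times B_{n-r+k}(k). \]
Each $Z(B_{r_i})=\langle \Delta _{r_i}^2 \rangle \cong \mathbb{Z}$ is finitely generated. Moreover, $B_{n-r+k}(k)$ is the preimage under the permutation homomorphism $\nu : B_{n-r+k} \longrightarrow S_{n-r+k}$ of the pointwise stabilizer of $\{1, \ldots , k\}$, which has finite index in $S_{n-r+k}$; hence $B_{n-r+k}(k)$ has finite index in the finitely generated group $B_{n-r+k}$, and is therefore finitely generated (Reidemeister--Schreier). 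As a finite direct product of finitely generated groups, $C_{B_n}(H)$ is finitely generated.
\end{proof}
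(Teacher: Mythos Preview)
Your proof is correct and, like the paper's, deduces the result from Theorem \ref{CHprop2}; the only difference is in how you establish finite generation of the factor $B_{n-r+k}(k)$. The paper invokes the short exact sequence $\ker \phi \hookrightarrow B_N(r) \twoheadrightarrow P_r$, citing Lambropoulou \cite{La02} for finite generation of the mixed braid group $\ker \phi$ and noting that $P_r$ is finitely generated, whence so is the extension $B_N(r)$. Your finite-index argument is more elementary and self-contained: since $B_N(r)=\nu^{-1}(\mathrm{Stab}_{S_N}(1,\ldots ,r))$ has index $N!/(N-r)!$ in $B_N$, Schreier's lemma gives finite generation without any appeal to the explicit presentations in \cite{La02}. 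Both routes are valid; yours avoids an external reference, while the paper's route has the side benefit of pointing to an explicit finite generating set via \cite{La02}.
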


\begin{proof}
Recall the exact sequence $\ker \phi \longrightarrow B_n(r) \longrightarrow P_r $ mentioned above. The mixed braid group $\ker \phi$ is finitely generated by \cite{La02}. So is the pure braid group $P_r$, and we may conclude that $B_n(r)$ is finitely generated.
The assertion follows as corollary to Theorem \ref{CHprop2}.
\end{proof}

\begin{theo} Let $H$ be a parabolic subgroup of $B_n$. Then its double centralizer is given by
 \[ C_{B_n}(C_{B_n}(H))=H \cdot Z(B_n). \]
\end{theo}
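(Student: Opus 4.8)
The plan is to imitate the short proof of Theorem~\ref{Referee}, using Propositions~\ref{GW03prop} and \ref{CHprop1} and Theorem~\ref{CHprop2} in place of Proposition~\ref{FRZ}. We may assume $H\subsetneq B_n$, since for $H=B_n$ one has $C_{B_n}(C_{B_n}(B_n))=C_{B_n}(Z(B_n))=B_n=B_n\cdot Z(B_n)$. So, with $H=\imath(B_{r_1}\times\cdots\times B_{r_k}\times 1)$ and $R(H)=\bigcup_{i=1}^k\mathcal C_i$ as in the preceding subsection, each curve $\mathcal C_i$ is essential in $D_n$ (it encloses $r_i$ punctures with $2\le r_i\le n-1$), and we abbreviate $N:=n-r+k$.

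First I would show $C_{B_n}(C_{B_n}(H))\subseteq\tilde B_{R(H)}$. For each $i$, the Dehn twist $T_{\mathcal C_i}=\imath(1,\dots,1,\Delta_{r_i}^2,1,\dots,1)$ about $\mathcal C_i$ lies in $Z(H)\subseteq C_{B_n}(H)$, because $\Delta_{r_i}^2$ is central in $B_{r_i}$ and the $\imath$-images of distinct factors commute. The canonical reduction system of a Dehn twist about an essential curve $c$ is $\{c\}$, so $R(T_{\mathcal C_i})=\{\mathcal C_i\}$, and Proposition~\ref{GW03prop} gives $C_{B_n}(C_{B_n}(H))\subseteq C_{B_n}(T_{\mathcal C_i})\subseteq B_{\mathcal C_i}$. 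Intersecting over $i$ yields $C_{B_n}(C_{B_n}(H))\subseteq\bigcap_{i=1}^k B_{\mathcal C_i}=\tilde B_{R(H)}$.

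Combining this with $C_{B_n}(H)\subseteq\tilde B_{R(H)}$ (Proposition~\ref{CHprop1}), Lemma~\ref{CHlem}(3) gives $C_{B_n}(C_{B_n}(H))=C_{\tilde B_{R(H)}}(C_{B_n}(H))$. Now $\tilde B_{R(H)}=\imath(B_{r_1}\times\cdots\times B_{r_k}\times B_N(k))$ and, by Theorem~\ref{CHprop2} and its proof, $C_{B_n}(H)=\imath(Z(B_{r_1})\times\cdots\times Z(B_{r_k})\times B_N(k))$. Applying the direct product lemma (Lemma~\ref{productCentralizer}, iterated over the $k+1$ factors) gives
\[ C_{B_n}(C_{B_n}(H))=\imath\bigl(C_{B_{r_1}}(Z(B_{r_1}))\times\cdots\times C_{B_{r_k}}(Z(B_{r_k}))\times C_{B_N(k)}(B_N(k))\bigr)=\imath\bigl(B_{r_1}\times\cdots\times B_{r_k}\times Z(B_N(k))\bigr). \]
It remains to identify $Z(B_N(k))$. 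Since $P_N\le B_N(k)\le B_N$, one has $Z(B_N(k))=C_{B_N(k)}(B_N(k))\subseteq C_{B_N(k)}(P_N)=B_N(k)\cap C_{B_N}(P_N)$, and by the well-known fact $C_{B_N}(P_N)=Z(B_N)$ this equals $B_N(k)\cap Z(B_N)=\langle\Delta_N^2\rangle$ (using $\Delta_N^2\in B_N(k)$, and checking the small case $N=2$ directly); the reverse inclusion $\langle\Delta_N^2\rangle\subseteq Z(B_N(k))$ is clear. Finally, the cabling identity $\Delta_n^2=\imath(\Delta_{r_1}^2,\dots,\Delta_{r_k}^2,\Delta_N^2)$ yields $\imath(1,\dots,1,\Delta_N^2)=\imath(\Delta_{r_1}^{-2},\dots,\Delta_{r_k}^{-2},1)\,\Delta_n^2\in H\cdot Z(B_n)$ and, conversely, $\Delta_n^2\in\imath(B_{r_1}\times\cdots\times B_{r_k}\times\langle\Delta_N^2\rangle)$; since moreover $H=\imath(B_{r_1}\times\cdots\times B_{r_k}\times 1)$, we conclude $C_{B_n}(C_{B_n}(H))=\imath(B_{r_1}\times\cdots\times B_{r_k}\times\langle\Delta_N^2\rangle)=H\cdot\langle\Delta_n^2\rangle=H\cdot Z(B_n)$.

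The first two steps are essentially bookkeeping on top of Propositions~\ref{GW03prop} and \ref{CHprop1} and Theorem~\ref{CHprop2}, so I expect the main obstacle to be the last step: establishing $Z(B_N(k))=\langle\Delta_N^2\rangle$ — which rests on the input $C_{B_N}(P_N)=Z(B_N)$ — together with correctly pinning down the cabling formula for $\Delta_n^2$, since this is precisely what converts the product of the local centers $Z(B_{r_i})$ into the single global center $Z(B_n)$ in the final answer.
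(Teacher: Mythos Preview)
Your proof is correct and follows essentially the same route as the paper's: reduce to $\tilde B_{R(H)}$, apply the direct product lemma to the factorization $\tilde B_{R(H)}=\imath(B_{r_1}\times\cdots\times B_{r_k}\times B_{N}(k))$, and then identify $\imath(Z(B_N(k)))$ with $\langle\Delta_n^2\cdot\prod_i\imath(\Delta_{r_i}^{-2})\rangle$. In fact you supply two details the paper leaves implicit---the explicit use of the Dehn twists $T_{\mathcal C_i}\in C_{B_n}(H)$ together with Proposition~\ref{GW03prop} to get $C_{B_n}(C_{B_n}(H))\subseteq\tilde B_{R(H)}$, and the justification $Z(B_N(k))=\langle\Delta_N^2\rangle$ via $P_N\le B_N(k)$ and $C_{B_N}(P_N)=Z(B_N)$---so your write-up is, if anything, more complete.
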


\begin{proof}
It suffices to prove for $H$ standard parabolic as above. Since $C_{B_n}(H)= (\prod _{i=1}^k \imath (Z(B_{r_i})) )\cdot \imath (B_{n-r+k}(k))$, every
$\beta \in C_{B_n}(H)$ preserves each cycle $\mathcal{C}_i$ for all $i$. We conclude that $C_{B_n}(C_{B_n}(H)) \le \tilde{B}_{R(H)}$. 
Hence $C_{B_n}(C_{B_n}(H))=C_{\tilde{B}_{R(H)}}(C_{B_n}(H))$, i.e. we may compute the double centralizer as
\[ C_{\imath (B_{r_1} \times \cdots \times B_{r_k} \times B_{n-r+k}(k))}
(\imath (Z(B_{r_i}) \times \cdots \times Z(B_{r_k})\times B_{n-r+k}(k)) \]
by the direct product lemma. Hence we get
\begin{eqnarray*} 
C_{B_n}(C_{B_n}(H))&=&(\prod _{i=1}^k C_{\imath (B_{r_i})} (\langle \imath (\Delta _{r_i}^2) \rangle )) \cdot Z(\imath (B_{n-r+k}(k)))  \\
&=& (\prod _{i=1}^k \imath (B_{r_i})) \cdot \langle \Delta _n^2 \cdot (\prod _{i=1}^k \imath (\Delta _{r_i}^{-2})) \rangle = H \cdot \langle \Delta _n^2 \rangle . 
\end{eqnarray*}
\end{proof}

\begin{coro}
Let $H$ be a parabolic subgroup of $B_n$. The subgroup conjugacy problem for $H$ in $B_n$ is solvable.
\end{coro}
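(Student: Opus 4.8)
The plan is to mirror, in the general parabolic setting, the argument used for Corollary~\ref{subCPH}, now feeding in the structural results of Section~\ref{General proof} in place of their connected-graph counterparts. As in the proof of the immediately preceding Theorem on double centralizers, it suffices to treat $H$ standard parabolic, say $H=\imath(B_{r_1}\times\cdots\times B_{r_k})$ with $r=\sum_{i=1}^k r_i\le n$: if $H=\gamma H_0\gamma^{-1}$ with $H_0$ standard parabolic, then for $x,y\in B_n$ there is $c\in H$ with $y=c^{-1}xc$ if and only if there is $c_0\in H_0$ with $\gamma^{-1}y\gamma=c_0^{-1}(\gamma^{-1}x\gamma)c_0$, so the two subgroup conjugacy problems are algorithmically equivalent under conjugation of the inputs by $\gamma$.

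First I would invoke two facts about this $H$. By the immediately preceding Theorem, $C_{B_n}(C_{B_n}(H))=H\cdot Z(B_n)=Z(B_n)\cdot H$, so the hypothesis of Theorem~\ref{Reduce} holds with $G=B_n$ and $Z(G)=Z(B_n)=\langle\Delta_n^2\rangle$. By the Corollary stating that the centralizer of a parabolic subgroup of $B_n$ is finitely generated (proved via Theorem~\ref{CHprop2}, the exact sequence $\ker\phi\to B_n(r)\to P_r$, and Lambropoulou's presentation of mixed braid groups \cite{La02}), the centralizer $C_{B_n}(H)\cong\mathbb{Z}^k\times B_{n-r+k}(k)$ is finitely generated, and one reads off an explicit finite generating set $\{g_1,\ldots,g_l\}$ of $C_{B_n}(H)$ from the central elements $\Delta_{r_i}^2$, the standard generators of $P_r$, and the generators of $\ker\phi$, all transported into $B_n$ by the cabling embedding $\imath$. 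Then Theorem~\ref{Reduce} shows that for $x,y\in B_n$ the existence of $c\in H$ with $y=c^{-1}xc$ is equivalent to the existence of $c'\in B_n$ simultaneously conjugating the tuple $(x,g_1,\ldots,g_l)$ to the tuple $(y,g_1,\ldots,g_l)$. Since the simultaneous conjugacy problem in braid groups is solvable \cite{LL02} (and, for efficiency, one may run the algorithm on the lexicographic super summit sets of \cite{KTV14}), the subgroup conjugacy problem for $H$ in $B_n$ is solvable, as claimed.

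The only genuine obstacle I anticipate is the effectivity of the generating set of $C_{B_n}(H)$: the cited Corollary asserts finite generation only abstractly, so one must check that the generators coming from Lambropoulou's presentation of the relevant mixed braid group, together with those of $P_r$ and the Garside squares $\Delta_{r_i}^2$, really can be written down as explicit words in the $\sigma_j$ via $\imath$ — routine, but it should be recorded. A secondary, purely bookkeeping point, exactly as in Corollary~\ref{subCPHRed}, is to conjugate the final list of equations back by $\gamma$ so that the algorithm consumes only the original data $(x,y,H)$; I would phrase the resulting explicit list of simultaneous-conjugacy equations as a corollary analogous to Corollary~\ref{subCPHRed}.
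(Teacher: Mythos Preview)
Your proposal is correct and follows exactly the approach implicit in the paper: apply Theorem~\ref{Reduce} with the general double centralizer theorem $C_{B_n}(C_{B_n}(H))=Z(B_n)\cdot H$ and a finite generating set of $C_{B_n}(H)$ (supplied by Theorem~\ref{CHprop2} and the finite-generation corollary), then invoke the solvability of the simultaneous conjugacy problem \cite{LL02}. The paper states this final corollary without proof precisely because it is the verbatim analogue of Corollary~\ref{subCPH}, and your write-up simply makes that analogy explicit; your remark on the effectivity of the generators is a fair caveat but, as you note, routine given Lambropoulou's explicit presentations.
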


{\bf Acknowledgements.} We thank an anonymous referee for finding a gap in a previous version of the proof of the main result and for suggesting a shorter proof. 
We thank Boaz Tsaban for helpful discussions. \par
This work was partially supported by the Emmy Noether Research Institute for
Mathematics and the Minerva Foundation (Germany), the EU network ASSYAT, and
the Oswald Veblen Fund of the Institute for Advanced Study in Princeton.

\end{document}